\documentclass{amsart}

\setlength{\textwidth}{6.0in}
\setlength{\evensidemargin}{0.25in}
\setlength{\oddsidemargin}{0.25in}
\setlength{\textheight}{9.0in}
\setlength{\topmargin}{-0.5in}
\setlength{\parskip}{2mm}
\setlength{\baselineskip}{1.7\baselineskip}

\newtheorem{theorem}{Theorem}[section]
\newtheorem{lemma}[theorem]{Lemma}
\newtheorem{remark}[theorem]{Remark}
\newtheorem{proposition}[theorem]{Proposition}
\newtheorem{corollary}[theorem]{Corollary}
\newtheorem{example}[theorem]{Example}
\newtheorem{definition}[theorem]{Definition}

\usepackage{amsmath, amsthm, amssymb}
\usepackage{mathtools}
\usepackage{hyperref}
\usepackage{enumerate}
\usepackage{tikz}

\newcommand{\cemph}[1]{\emph{\color{red}#1}}

\newcommand\R{\mathbb{R}}
\newcommand\N{\mathbb{N}}
\newcommand\B{\mathbb{B}}
\newcommand\mS{\mathbb{S}}
\newcommand\CK{\mathcal{K}}
\newcommand\CO{\mathcal{O}}

\DeclareMathOperator{\conv}{conv}
\DeclareMathOperator{\pos}{pos}
\DeclareMathOperator{\aff}{aff}
\DeclareMathOperator{\lin}{lin}
\DeclareMathOperator{\cl}{cl}
\DeclareMathOperator{\inte}{int}
\DeclareMathOperator{\bd}{bd}
\DeclareMathOperator{\ext}{ext}
\DeclareMathOperator{\ray}{ray}

\newcommand\gauge[1]{\left\| #1 \right\|}
\newcommand\normal[2]{\frac{#1}{\left\| #1 \right\|_{#2}}}

\newcommand\upmean[3]{\overline{M}_{#1}(#2,#3)}
\newcommand\lowmean[3]{\underline{M}_{#1}(#2,#3)}

\newcommand\overM{\overline{M}}
\newcommand\underM{\underline{M}}

\newcommand\ds{2/\scale pt}

\newcommand{\stacked}[3]{%
  \begingroup
  \settowidth{\dimen0}{$#1$}%
  \vbox{%
    \hbox to \dimen0{\hfil $#1$ \hfil}%
    \vskip 0em
    \hbox to \dimen0{\hfil $#2$ \hfil}%
    \vskip -0.3em
    \hbox to \dimen0{\hfil $#3$ \hfil}%
  }%
  \endgroup
}

\DeclareRobustCommand{\m&}{%
  \ifdim\fontdimen1\font>0pt
    \textsl{\symbol{`\&}}%
  \else
    \symbol{`\&}%
  \fi
}

\newcommand\mpvara{x}
\newcommand\mpvarb{y}
\newcommand\intvar{t}

\title[$p$-Means of Convex Bodies]{$p$-Means of Convex Bodies: Sharpening Relations and Structural Properties}
\author[R. Brandenberg]{Ren\'{e} Brandenberg}
\author[F. Grundbacher]{Florian Grundbacher}
\keywords{$p$-Means, Stability, Covering radius, Banach--Mazur distance, Minkowski asymmetry}
\subjclass[2020]{Primary 52A21; Secondary 52A40}
\date{\today}

\begin{document}

\begin{abstract}
We study general $p$-means of convex bodies, extending the classical definitions by W.~J.~Firey via support and gauge functions to two families ranging over all $p \in [-\infty,\infty]$. For values of $p$ beyond the classical ranges, we show that $p$-means of polytopes are again polytopes, yielding simpler structural descriptions. Using a natural characterization of dilates of convex bodies based on their boundary structure, we characterize the equality cases between the two types of $p$-means for the same $p$-value. Extending recent results on standard mean-symmetrizations of convex bodies, we further establish (in almost all instances tight) inequalities quantifying how well arbitrary $p$-means of convex bodies approximate each other. These bounds lead to characterizations and sharp stability results for the equality cases between $p$-means for different $p$-values. As a corollary, every Minkowski centered convex body is equidistant from all its $p$-symmetrizations with respect to the Banach--Mazur distance.
\end{abstract}

\maketitle

\section{Introduction and Results}

Since the introduction of $L_p$-addition by Firey in \cite{Fipmeans}
and its application to Brunn--Minkowski theory by Lutwak in \cite{Lutheo1,Lutheo2},
the resulting $L_p$-Brunn--Minkowski theory has been an essential field of study in convex geometry.
A general overview of the topic is presented in \cite[Chapter 9]{Sch}.
See also \cite{Bo} and the references therein for more recent developments.

Especially in light of the log-Brunn--Minkowski conjecture posed in \cite{BoLuYaZh},
the question of how the geometric mean of convex bodies should be defined (re-)emerged,
both as a tool for attacking the conjecture (cf.~\cite{CrFr}) 
and to provide a mean that fulfills a list of desired properties.
For the latter goal, multiple approaches have been presented in recent literature
(cf.~\cite{MiRoirrat,MiRog1,MiRoenvelopes,Ro} and the references therein).
While the constructions provided constitute insightful ideas,
the authors of these papers pointed out themselves that there is (almost) always some desired property of geometric means missing.
Moreover, some of the constructions appear somewhat difficult to handle,
such that it is still an open question
whether the properties collected in \cite{Ro}
characterize a unique geometric mean
despite multiple constructions being known to satisfy them
at least for symmetric convex bodies.

The above questions about the definition and the uniqueness of the geometric mean of convex bodies motivate
a closer analysis of the relations between general $p$-means of convex bodies.
Most constructions of the geometric mean are naturally based on $p$-means, which is especially true for those in \cite{MiRoirrat,MiRog1,Ro}.
An improved understanding of how $p$-means are related to each other
therefore provides a better framework to deal with the above questions.
The success of this approach will be demonstrated
in a follow-up publication focused on the geometric mean.

Before we give some more intrinsic motivation for studying the relations between $p$-means of convex bodies, we introduce the necessary notation for the following discussion.
Given $X,Y \subset \R^n$, $z \in \R^n$, $\Lambda \subset \R$, and $\mu \in \R$,
we define the \cemph{Minkowski sum}
$X + Y \coloneqq \{x + y : x \in X, y \in Y\}$,
the \cemph{$z$-translation}
$X + z \coloneqq z + X \coloneqq \{z\} + X$,
the \cemph{$\mu$-dilation} $\mu X \coloneqq \{ \mu x : x \in X \}$,
the \cemph{$\Lambda$-dilation-union}
$\Lambda X \coloneqq \bigcup_{\lambda \in \Lambda} \lambda X$,
and $\Lambda z \coloneqq \Lambda \{ z \}$.
We abbreviate $(-1) X$ and $X + (-1) Y$ by $-X$ and $X - Y$, respectively.
The \cemph{convex}, \cemph{affine}, \cemph{linear}, and \cemph{positive hulls} of $X$ are denoted by $\conv(X)$, $\aff(X)$, $\lin(X)$, and $\pos(X)$, respectively.

For any closed and convex set $C \subset \R^n$ with $0 \in C$, its \cemph{polar} is defined as
\[
    C^\circ
    \coloneqq \{ a \in \R^n : a^T x \leq 1 \text{ for all } x \in C \}.
\]
Its
\cemph{support function} is given by
\[
    h_C \colon \R^n \to [0,\infty],
    h_C(a) \coloneqq \sup \{ a^T x : x \in C \},
\]
and its \cemph{gauge function} by
\[
    \gauge{\cdot}_C \colon \R^n \to [0,\infty],
    \gauge{x}_C \coloneqq \inf \{ \lambda \geq 0 : x \in \lambda C \}.
\]

We write $\CK^n$ for the set
of \cemph{(convex) bodies} in~$\R^n$, i.e., non-empty compact convex sets,
and $\CK^n_0$ for the set of bodies containing the origin in their interior.
For $p \geq 1$, the $n$-dimensional \cemph{unit ball}
of the \cemph{$p$-norm} $\gauge{\cdot}_p$
is given by $\B_p^n \in \CK^n_0$ with \cemph{unit sphere} $\mS_p^n$,
where we omit the dimension if it is clear from context.
For general $p \in \R \setminus \{0\}$,
the \cemph{$p$-mean} of two positive real numbers $\mpvara, \mpvarb > 0$
is defined as
\[
	m_p(\mpvara,\mpvarb)
    \coloneqq \left( \frac{\mpvara^p + \mpvarb^p}{2} \right)^{\frac{1}{p}}.
\]
Moreover, the \cemph{$(-\infty)$-}, \cemph{$0$-}, and \cemph{$\infty$-means} are, as continuous extensions in $p$, given by
\[
	m_{-\infty}(\mpvara,\mpvarb) \coloneqq \min \{ \mpvara, \mpvarb \},
        \quad
	m_0(\mpvara,\mpvarb) \coloneqq \sqrt{\mpvara \mpvarb},
        \quad \text{and} \quad
	m_{\infty}(\mpvara,\mpvarb) \coloneqq \max \{ \mpvara, \mpvarb \}.
\]

The \cemph{standard means} of convex bodies $K, L \in \CK^n_0$ are the \cemph{minimum} $K \cap L$,
the \cemph{harmonic mean} $( \frac{K^\circ+L^\circ}{2} )^\circ$,
the \cemph{arithmetic mean} $\frac{K+L}{2}$,
and the \cemph{maximum} $\conv(K \cup L)$.
These sets are increasing in order
as first shown in \cite{Fipolarmeans}.
Calling the second set the harmonic mean of $K$ and $L$ is justified by interpreting the polar of a convex body as its inverse
(cf.~\cite{MiRog1}).

With regard to the geometric mean, we are especially interested in $p$-means of convex bodies when $p=0$.
However, due to the behavior of $p$-means in dependence of $p$, they are often defined only for restricted ranges of $p$:
Starting with the observation that $h_{\frac{K+L}{2}} = \frac{1}{2} (h_K+h_L)$, Firey defined the first type of $p$-mean implicitly in \cite{Fipmeans} by setting its support function to be $m_p(h_K,h_L)$.
This is possible whenever $m_p(h_k,h_L)$ is a convex function,
which is guaranteed if $p \in [1,\infty]$.
For $p < 1$, however, this convexity typically does not hold. 
Similarly, Firey generalized in \cite{Fipolarmeans} the property that $\gauge{\cdot}_{( \frac{K^\circ+L^\circ}{2} )^\circ} = \frac{1}{2} (\gauge{\cdot}_K+\gauge{\cdot}_L)$ to a second type of $p$-mean by defining it implicitly to have the gauge function $m_{-p}(\gauge{\cdot}_K,\gauge{\cdot}_L)$.
Again, owed to the required convexity of gauge functions, this is generally only possible for $p \in [-\infty,-1]$.
Since neither definition of $p$-means is applicable for $p=0$, a different approach is required.

\begin{definition}
\label{def:p-means}
Let $K,L \in \CK^n_0$ and $p \in [-\infty,\infty]$.
The \cemph{upper $p$-mean} of $K$ and $L$ is defined as
\[
    \upmean{p}{K}{L}
    \coloneqq \{ x \in \R^n :
    a^T x \leq m_p(h_K(a),h_L(a))
    \textup{ for all } a \in \mS_2 \}.
\]
Their \cemph{lower $p$-mean} is given by
\[
    \lowmean{p}{K}{L}
    \coloneqq \conv \left( \left\{
    m_p(\gauge{v}_K^{-1},\gauge{v}_L^{-1}) v :
    v \in \mS_2 \right\} \right).
\]
\end{definition}

It is immediate to see that $\upmean{p}{K}{L}$ is the largest body $C$ that satisfies $h_C \leq m_p(h_K,h_L)$, whereas $\lowmean{p}{K}{L}$ is the smallest body $C$ with $\gauge{\cdot}_C \leq m_{-p} (\gauge{\cdot}_K,\gauge{\cdot}_L)$.
In particular, they coincide with the $p$-means proposed by Firey for $p \geq 1$ and $p \leq -1$, respectively, which includes the four standard means. Since one typically wants the $p$-means of $K$ and $L$ to generalize these four special cases,
it is natural to define \emph{the} $p$-mean of $K$ and $L$ as $\upmean{p}{K}{L}$ when $p > 0$ and as $\lowmean{p}{K}{L}$ when $p < 0$.
Nonetheless, we study upper and lower $p$-means for all values $p \in [-\infty,\infty]$
and for brevity refer to the entirety of both series simply as $p$-means.
A major difficultly in studying either type of $p$-mean
for the values of $p$ that Firey left out lies in the fact that the inequalities on their support or gauge functions may in general be strict.

The modification in the definition of the upper $p$-means was first suggested in \cite{BoLuYaZh}, there only for $p \geq 0$.
It has since gained some attention, largely motivated through the log-Brunn--Minkowski conjecture (see, e.g., \cite{ChHuLiLiu,Pu}). More rarely, also the case $p < 0$ has been considered (see, e.g., \cite{KolMi}).

For the lower $p$-means, on the other hand, typically the slightly different concept of $p$-radial sums is used (cf.~\cite[Chapter~$9$]{Sch}).
Replacing gauge functions by radial functions, those sums combine two given star-bodies into the smallest star-body that contains all points in the generating set of the corresponding lower $p$-mean.
Since we want, especially with regard to possible definitions of the geometric mean, that $p$-means of convex bodies are again convex, we additionally take convex hulls here.
An advantage of this approach is that upper and lower $p$-means are directly related by polarity, which allows for a more unified theory and enables us to transfer results between the two types of $p$-means.
This is especially useful when results are easier established for one specific type of $p$-mean and is taken advantage of in both directions.
Let us also point out that lower $p$-means have been considered for example in \cite{HCYN,Sa}, even if only indirectly, where dual (log-)Brunn--Minkowski-type inequalities have been established.

An early, yet central result by Firey about $p$-means of convex bodies is the generalization of the inequality between the arithmetic mean and the harmonic mean to the realm of convex bodies.
Given the importance of this inequality already for real numbers, it is only natural that also its analog for convex bodies $K,L \in \CK^n_0$ has received special attention.
With $p,q \in [1,\infty]$, $p \leq q$, it takes the form
\begin{align}
\begin{split} \label{eq:Firey_chain}
    K \cap L
    & = \lowmean{-\infty}{K}{L}
    \subset \lowmean{-q}{K}{L}
    \subset \lowmean{-p}{K}{L}
    \subset \lowmean{-1}{K}{L}
    = \left( \frac{K^\circ + L^\circ}{2} \right)^\circ
    \\
    & \subset \frac{K+L}{2}
    = \upmean{1}{K}{L}
    \subset \upmean{p}{K}{L}
    \subset \upmean{q}{K}{L}
    \subset \upmean{\infty}{K}{L}
    = \conv(K \cup L).
\end{split}
\end{align}
Recently, various aspects of \eqref{eq:Firey_chain} have been sharpened in \cite{BrDiMe} for the standard means,
with a particular focus on the symmetrizations of convex bodies, i.e., when $L = -K$.
Most importantly to us, the optimality of the natural containments is studied,
and additional dilation factors are introduced to improve the containments whenever possible.
Based on this idea, the chain \eqref{eq:Firey_chain} is also reversed in the order of $p$
by again introducing optimal dilation factors.

Generalizing these results to the setting of $p$-means of arbitrary bodies marks our second starting motivation.
It turns out that several of the insights in \cite{BrDiMe} remain true in this more general situation,
though some phenomena are lost along the way.
We give a detailed comparison of our new results to those in \cite{BrDiMe} in Remark~\ref{rem:BDM_comparison} below.
The advantage of the generalized setting is that it provides a
much less restricted way of studying the relations between arbitrary $p$-means.
In particular, we obtain a straightforward proof of the equality cases between $p$-means and the stability thereof,
and gain insights into the structures within the Banach--Mazur compactum that $p$-means provide.

Before we are able to concentrate on these finer structural results, 
we first need to address some more elementary aspects.
Given that the two types of $p$-means have hardly been studied for the same $p$-values at the same time,
many questions about even their basic relations are open.
A first natural question is, for example, under which circumstances they coincide.
It turns out that the two types of $p$-means are almost always different, agreeing only in some trivial cases.

\begin{theorem}
\label{thm:same_p}
Let $K,L \in \CK^n_0$ and $p \in [-\infty,\infty]$.
Then
\[
    \lowmean{p}{K}{L}
    \subset \upmean{p}{K}{L},
\]
with equality if and only if $n=1$, or $p = \pm \infty$, or $K$ and $L$ are dilates of each other.
\end{theorem}

The same questions about set containments and equality cases can be asked about $p$-means for different values of $p$.
It is clear from the monotonicity of $p$-means of real numbers in $p$ that the upper and lower $p$-means are individually increasing in $p$.
Moreover, Theorem~\ref{thm:same_p} shows how the individual chains are compatible with each other, leading to the following connected double-chain for $-\infty < p < q < \infty$.

\begin{equation}
\begin{split}\label{eq:forward_double_chain}
    \stacked{\upmean{-\infty}{K}{L}}{\rotatebox{90}{$=$}}{\lowmean{-\infty}{K}{L}} \
    \stacked{\subset}{}{\subset} \
    \stacked{\upmean{p}{K}{L}}{\rotatebox{90}{$\subset$}}{\lowmean{p}{K}{L}} \
    \stacked{\subset}{}{\subset} \
    \stacked{\upmean{q}{K}{L}}{\rotatebox{90}{$\subset$}}{\lowmean{q}{K}{L}} \
    \stacked{\subset}{}{\subset} \
    \stacked{\upmean{\infty}{K}{L}}{\rotatebox{90}{$=$}}{\lowmean{\infty}{K}{L}}
\end{split}
\end{equation}

Extending Firey's results in \cite{Fipolarmeans,Fipmeans},
we find that,
just like for real numbers,
either type of $p$-mean coincides with
either type of $q$-mean for any $p \neq q$ if and only if $K = L$.
In other words, any two means that are not
directly below each other in \eqref{eq:forward_double_chain}
coincide precisely when $K = L$.
In fact, we can show even a stability version of this result.
Doing so requires an appropriate metric.
Typical choices might be the Hausdorff distance, the \emph{geometric distance} from \cite{LiTJ},
or volume-based distances analogous to the parameters in \cite{Tasch}.
However, their individual properties make them less suitable for our purposes.
Instead, we draw inspiration from the methods in \cite{BrDiMe} and use the following distance measure, which unifies some of the desired properties of the aforementioned distances while overcoming their individual disadvantages.
We refer the reader to Section~\ref{sec:radii} for a
detailed discussion of the properties of the following metric.

\begin{definition}
Let $K,L \subset \R^n$.
The \cemph{covering radius} of $K$ w.r.t.~$L$ is defined as
\[
    R_0(K,L)
    \coloneqq \inf \{ \lambda \geq 0 : K \subset \lambda L \}
    \in [0,\infty].
\]
The \cemph{covering distance} between $K$ and $L$ is $R_0^{\max}(K,L) \coloneqq \max \{ R_0(K,L), R_0(L,K) \}$.
\end{definition}

We are now ready to state our stability version of the equality cases between $p$-means for different $p$-values.
Asymptotic orders are always understood in terms of $0 \leq \varepsilon \to 0$.

\begin{theorem}
\label{thm:stability}
Let $K,L \in \CK^n_0$, $p,q \in [-\infty,\infty]$ with $p < q$,
and $M_p \in \{ \overline{M}_p, \underline{M}_ p\}$,
$M_q \in \{ \overline{M}_q, \underline{M}_q \}$.
Further write $\varepsilon \coloneqq R_0^{\max}(M_p(K,L),M_q(K,L)) - 1$.
\begin{enumerate}[(i)]
\item If $p \neq -\infty$, $q \neq \infty$, and $\varepsilon \leq \frac{m_q(c,1)}{m_p(c,1)} - 1$ for $c \coloneqq 1 + \frac{\delta}{\max\{|p|,|q|\}+3/2}$ where $\delta \in [0,1)$, then
\[
    R_0^{\max}(K,L)
    \leq 1 + \sqrt{\frac{8 \varepsilon}{(1-\delta) (q-p)}}.
\]
Moreover, for any $c' \geq 1$ and $\varepsilon' = \frac{m_q(c',1)}{m_p(c',1)} - 1$ there exist $K',L' \in \CK^n_0$ with
\[
    \varepsilon'
    = R_0^{\max}(M_p(K',L'),M_q(K',L')) - 1
        \quad \text{and} \quad
    1 + \sqrt{\frac{8 \varepsilon'}{q-p}}
    \leq R_0^{\max}(K',L').
\]

\item If $p = -\infty$ and $q \neq \infty$, or $p \neq -\infty$ and $q = \infty$, then
\[
    R_0^{\max}(K,L)
    = 1 + 2 \varepsilon + \CO(\varepsilon^2),
\]
where the error term depends on $p$ and $q$ but not on $K$, $L$, or $n$.

\item If $p = -\infty$ and $q = \infty$, then
\[
    R_0^{\max}(K,L)
    = 1 + \varepsilon.
\]
\end{enumerate}
\end{theorem}

The exact value of the error term in (ii) can be obtained from our proof of the theorem.

Our main approach to proving Theorem~\ref{thm:stability} is to extend the aforementioned study from \cite{BrDiMe}, where dilation factors are introduced to tighten the set inequalities between the standard means, to the setting of general $p$-means.
We begin with a general lower bound on these factors, which already yields the above stability estimate up to some analytical details.

\begin{theorem}
\label{thm:lower_bound}
Let $K,L \in \CK^n_0$, $p,q \in [-\infty,\infty]$,
and $M_p \in \left\{ \overM_p, \underM_p \right\}$, $M_q \in \left\{ \overM_q, \underM_q \right\}$.
Then
\[
    \frac{m_p(R_0^{\max}(K,L),1)}{m_q(R_0^{\max}(K,L),1)}
    \leq R_0(M_p(K,L),M_q(K,L)).
\]
Equality is attained whenever $n=1$ or $K$ and $L$ are dilates of each other.
\end{theorem}

We derive this lower bound in a straightforward way from upper bounds on the covering radii between $p$-means,
more precisely from part (i) in the theorem below.
To complete the study of covering radii between $p$-means (as far as we were able to),
we also give the upper bounds in the other parts of that theorem.
Note that we have in summary sharp upper and lower bounds
for almost all combinations of $p$-means and values of $R_0^{\max}(K,L)$,
the only exception being the upper bound on $R_0(\upmean{p}{K}{L},\lowmean{q}{K}{L})$ when $(p,q) \in (-\infty,1) \times (-1,\infty)$
(and a more explicit expression of the right-hand side in (iv) below).

\begin{theorem}
\label{thm:upper_bounds}
Let $K,L \in \CK^n_0$, $p,q \in [-\infty,\infty]$,
and $M_p \in \left\{ \overM_p, \underM_p \right\}$, $M_q \in \left\{ \overM_q, \underM_q \right\}$.
\begin{enumerate}[(i)]
\item If $(M_p,M_q) \neq (\overM_p,\underM_q)$ and $p \geq q$, then
\begin{align*}
    R_0(M_p(K,L),M_q(K,L))
    & = \frac{m_p(R_0^{\max}(K,L),1)}{m_q(R_0^{\max}(K,L),1)}.
\intertext{
\item If $(M_p,M_q) \neq (\overM_p,\underM_q)$ and $p < q$, then
}
    R_0(M_p(K,L),M_q(K,L))
    & \leq 1.
\intertext{
\item If $(p,q) \notin (1,\infty) \times (-\infty,-1)$, then
}
    R_0(\upmean{p}{K}{L},\lowmean{q}{K}{L})
    & \leq \min \{ m_p(R_0^{\max}(K,L),1), m_{-q}(R_0^{\max}(K,L),1) \}.
\intertext{
\item If $(p,q) \in (1,\infty) \times (-\infty,-1)$, then
}
    R_0(\upmean{p}{K}{L},\lowmean{q}{K}{L})
    & \leq 2^{\frac{1}{q} - \frac{1}{p}} 
        R_0 \left( \begin{pmatrix} R_0^{\max}(K,L) & 1 \\
                    1 & R_0^{\max}(K,L) \end{pmatrix}
                \B_{\frac{p}{p-1}}^2, \B_{-q}^2 \right).
\end{align*}
If $\frac{p}{p-1} \geq -q$, then the right-hand side is equal to $m_1(R_0^{\max}(K,L),1) = \frac{R_0^{\max}(K,L) + 1}{2}$.
\end{enumerate}
For any inequality exluding (possibly) the case $(p,q) \in  (-\infty,1) \times (-1,\infty)$ for (iii),
there exists for $n \geq 2$ and $R \geq 1$ a body $K \in \CK^n_0$ such that $R = R_0^{\max}(K,-K)$ and equality is attained for the choice $L = -K$.
\end{theorem}

Analogous to the results in \cite{BrDiMe}, part (i) of the above theorem
enables us to reverse the natural order of $p$-means,
leading to the following connected double-chain with dilation factors for $-\infty < p < q < \infty$.
For readability, we abbreviate $R_0^{\max} \coloneqq R_0^{\max}(K,L)$.
\begin{equation}
\begin{split} \label{eq:reverse_chain}
    \stacked{\upmean{\infty}{K}{L}}{\rotatebox{90}{$=$}}{\lowmean{\infty}{K}{L}} \
    \stacked{\subset}{}{\subset} \
    \stacked{m_{-q}(R_0^{\max},1) \upmean{q}{K}{L}}{\rotatebox{90}{$\subset$}}{m_{-q}(R_0^{\max},1) \lowmean{q}{K}{L}} \
    \stacked{\subset}{}{\subset} \
    \stacked{m_{-p}(R_0^{\max},1) \upmean{p}{K}{L}}{\rotatebox{90}{$\subset$}}{m_{-p}(R_0^{\max},1) \lowmean{p}{K}{L}} \
    \stacked{\subset}{}{\subset} \
    \stacked{R_0^{\max} \upmean{-\infty}{K}{L}}{\rotatebox{90}{$=$}}{R_0^{\max} \lowmean{-\infty}{K}{L}}
\end{split}
\end{equation}
The left-most and right-most bodies, and therefore all of them,
have a common boundary point.

Our examples to verify sharpness in the applicable cases of Theorem~\ref{thm:upper_bounds}~(iii),
as well as for studying the (strict) monotonicity of $p$-means in $K$ and $L$,
are based on the following result about $p$-means of polytopes.
It states that these $p$-means are again polytopes
for the ranges of $p$-values where the implicit definitions by Firey
via support and gauge functions generally 
fail.
While our applications of this result show how it can greatly simplify the computation of the corresponding $p$-means,
it is also a structural insight of its own interest,
particularly for $\upmean{p}{K}{L}$ when $p \in [0,1]$ with regard to the log-Brunn--Minkowski conjecture.

Let us introduce some notation to properly state the result.
For $K \in \CK^n$, we denote its set of \cemph{extreme points} by $\ext(K)$.
For $x \in K$, $N_K(x)$ is the \cemph{cone of outer normals} of $x$ w.r.t.~$K$.
For a polytope $P \in \CK^n_0$,
we define its \cemph{coarse normal fan} as
$\mathcal{N}_P \coloneqq \{ N_P(x) : x \in \ext(P) \}$
and its \cemph{coarse face fan} as
$\mathcal{F}_P \coloneqq \{ \pos(F) : F \text{ is a facet of } P \}.$
Note that $\mathcal{N}_P = \mathcal{F}_{P^\circ}$ and $\mathcal{F}_P = \mathcal{N}_{P^\circ}$.
For a cone $C \subset \R^n$, we write $\ray(C)$ for the set of Euclidean unit vectors generating the extreme rays of $C$.

\begin{theorem}
\label{thm:polytopes}
Let $K,L \in \CK^n_0$ be polytopes and $p \in [-\infty,1]$.
Then $\upmean{p}{K}{L}$ and $\lowmean{-p}{K}{L}$ are also polytopes.
More precisely,
\[
    \upmean{p}{K}{L}
    = \left\{ x \in \R^n :
    a^T x \leq m_p(h_K(a),h_L(a)) \text{ for all }
    a \in \bigcup_{N_K \in \mathcal{N}_K, N_L \in \mathcal{N}_L} \ray(N_K \cap N_L) \right\}
\]
and
\[
    \lowmean{-p}{K}{L}
    = \conv \left( \left\{
    m_{-p}(\gauge{v}_K^{-1},\gauge{v}_L^{-1}) v :
    v \in \bigcup_{F_K \in \mathcal{F}_K, F_L \in \mathcal{F}_L}
    \ray(F_K \cap F_L) \right\} \right).
\]
\end{theorem}

Let us close the introduction with an outline of the paper.
In Section~\ref{sec:prelim}, we introduce the rest of the notation and collect some basic properties of $p$-means used throughout.
We turn to $p$-means of polytopes and Theorem~\ref{thm:polytopes} in Section~\ref{sec:polytopes}.
Afterward, in Section~\ref{sec:boundary}, we study the common boundary structures with regard to the chain \eqref{eq:forward_double_chain},
which allows us to extend results on the optimality of parts of \eqref{eq:forward_double_chain} from \cite{BrDiMe}.
It is also in preparation for the proof of Theorem~\ref{thm:same_p} in Section~\ref{sec:equality_cases},
where we additionally collect results on the equality cases between general $p$-means of convex bodies.
We verify Theorems~\ref{thm:lower_bound}~and~\ref{thm:upper_bounds} in Section~\ref{sec:radii},
based on which we can establish Theorem~\ref{thm:stability}.
Lastly, in Section~\ref{sec:distance}, we show additional applications of Theorems~\ref{thm:lower_bound}~and~\ref{thm:upper_bounds} to the theory of Banach--Mazur distances.
In particular, we show that any Minkowski centered body $K$ is equidistant from all its $p$-symmetrizations, i.e., $p$-means of $K$ and $-K$.

\section{Preliminaries}
\label{sec:prelim}

We begin this section with an introduction of the remaining notation used throughout the paper.
For a number $m \in \N$, we define $[m] \coloneqq \{ 1, \ldots, m\}$.
The \cemph{standard unit vectors} in $\R^n$ are denoted by $u^1, \ldots, u^n$.
The closed \cemph{segment} connecting $x,y \in \R^n$ is given by $[x,y]$.
For a set $X \subset \R^n$, we write $\inte(X)$, $\bd(X)$, and $\cl(X)$ for its \cemph{interior}, \cemph{boundary}, and \cemph{closure}, respectively.

Given $a \in \R^n \setminus \{0\}$ and $\beta \in \R$,
the \cemph{halfspace} $H_{(a,\beta)}^\leq$ is defined as $\{ x \in \R^n : a^T x \leq \beta \}$ (and we use $H_{(a,\infty)}^\leq \coloneqq \R^n$).
The \cemph{hyperplane} $H_{(a,\beta)}$ is the boundary of $H_{(a,\beta)}^\leq$.
A halfspace $H^\leq \subset \R^n$ \cemph{supports} the set $X \subset \R^n$ at $x \in X$ if $X \subset H^\leq$ and $x \in \bd(H^\leq)$,
and a hyperplane $H$ supports $X$ at $x$ if one of the halfspaces bounded by $H$ does.

For bodies $K,L \in \CK^n$, the \cemph{circumradius} of $K$ w.r.t.~$L$ is defined as
\[
    R(K,L)
    \coloneqq \inf \{ \lambda \geq 0 : K \subset \lambda L + c, c \in \R^n \}
    \in [0,\infty].
\]
Any $c \in \R^n$ with $K \subset R(K,L) L + c$ is a \cemph{circumcenter} of $K$ w.r.t.~$L$.
We say that $K$ is \cemph{optimally contained} in $L$, abbreviated by $K \subset^{opt} L$, if $K \subset L$ and $R(K,L) = 1$.
The \cemph{Minkowski asymmetry} of $K$ is defined as $s(K) \coloneqq R(K,-K)$,
and any $c \in \R^n$ with $K - c \subset -s(K) (K - c)$ is a \cemph{Minkowski center} of $K$.
If $c = 0$ is a Minkowski center of $K$, then $K$ is \cemph{Minkowski centered}.
The body $K$ is \cemph{$c$-symmetric} with \cemph{center of symmetry} $c \in \R^n$ if 
$K - c = - (K - c)$,
and \cemph{symmetric} if it has a center of symmetry, i.e., if $s(K)=1$.
For some properties of these notions, see, e.g., \cite{BrDiMe}.

For $K \in \CK^n$, the \cemph{dimension} $\dim(K)$ is the dimension of $\aff(K)$.
We say that $K$ is \cemph{fulldimensional} if $\dim(K) = n$, or equivalently if $\inte(K) \neq \emptyset$.
For fulldimensional $K,L \in \CK^n$, the \cemph{Banach--Mazur distance} between $K$ and $L$ is
\[
    d_{BM}(K,L)
    \coloneqq \inf \{ \lambda \geq 0 : K + c_1 \subset A L + c_2 \subset \lambda (K + c_1), A \in \R^{n \times n} \text{ invertible}, c_1, c_2 \in \R^n \}.
\]
It can be shown (see, e.g., \cite[Lemma~2.1]{XiLe}) that
\[
    d_{BM}(K,L) = \min \{ R(K,AL) R(AL,K) : A \in \R^{n \times n} \text{ invertible} \}.
\]

Next, we collect some basic properties of support and gauge functions used throughout the paper. 
For proofs of almost all of these properties, see \cite[Section~1.7]{Sch}.
The only exception, $h_{AK}(v) = h_K(A^Tv)$ in (iv), follows from simple linear algebra.

\begin{proposition}
\label{prop:support_gauge_fct}
Let $K,L \subset \R^n$ with $0 \in K \cap L$ be closed and convex, $v,w \in \R^{n}$, $\lambda \geq 0$, and $A \in \R^{n \times n}$.
Then:
\begin{enumerate}[(i)]
\item $\gauge{\cdot}_{K} = h_{K^{\circ}}$ and $h_K = \gauge{\cdot}_{K^\circ}$,
\item $h_{K}(\lambda v) = \lambda h_{K}(v)$, $h_{A K}(v) = h_K(A^T v)$, and $h_{-K}(v) = h_K(-v)$,
\item $h_{K}(v+w) \leq h_{K}(v) + h_{K}(w)$,
\item $h_{K}$ is convex, and thus continuous if $K$ is bounded, and
\item $h_{\lambda K + L} = \lambda h_{K} + h_{L}$, and $h_K \leq h_L$ if and only if $K \subset L$.
\end{enumerate}
\end{proposition}

Part (i) gives us results analogous to parts (ii)~--~(v) for $\gauge{\cdot}_K$,
so we do not state them explicitly.
In particular, both $h_K$ and $\gauge{\cdot}_K$ characterize a closed convex set $K \subset \R^n$ with $0 \in K$ in that
if $L \subset \R^n$ is another such set,
then its support/gauge function coincides with that of $K$ if and only if $K = L$.

Like for support and gauge functions, we also collect some basic properties of the $p$-mean $m_p$.

\begin{remark}
\label{rem:mp}
Let $p \in [-\infty,\infty]$ and $\mpvara, \mpvarb > 0$.
It is well-known
that $m_p(\mpvara,\mpvarb)$ is increasing and continuous in all three arguments.
Furthermore, for $p < q$, it holds $m_p(\mpvara,\mpvarb) = m_q(\mpvara,\mpvarb)$ if and only if $\mpvara = \mpvarb$.
If $p \neq \pm \infty$ and $\mpvara \leq \mpvara'$, $\mpvarb \leq \mpvarb'$,
then equality holds in $m_p(\mpvara,\mpvarb) \leq m_p(\mpvara',\mpvarb')$ if and only if $\mpvara = \mpvara'$ and $\mpvarb = \mpvarb'$.
Furthermore, it is easy to see that $m_p(\mpvara,\mpvarb) = 1/m_{-p}(1/\mpvara,1/\mpvarb)$
and for $\gamma > 0$ also $m_p(\gamma \mpvara, \gamma \mpvarb) = \gamma m_p(\mpvara, \mpvarb)$.
The well-known (reverse) Minkowski inequality states that $m_p$ is convex when $p \in [1,\infty]$ (and concave when $p \in [-\infty,1]$).

The above properties of $m_p$ show for $K,L \in \CK^n_0$, $p \in [-\infty,\infty]$, and $v \in \R^n \setminus \{0\}$ that
\[
    m_{-p}(\|v\|_K^{-1}, \|v\|_L^{-1}) v
    = \frac{v}{m_p(\|v\|_K, \|v\|_L)}.
\]
We use this identity frequently throughout depending on which form is more convenient.
\end{remark}

We end this section with a lemma collecting some basic properties of upper and lower $p$-means of convex bodies.
Part (ii) already verifies the set containment in Theorem~\ref{thm:same_p}.
The equality case requires a more in-depth analysis,
which is provided in Sections~\ref{sec:boundary}~and~\ref{sec:equality_cases}.

\begin{lemma}
\label{lem:p-mean_basics}
Let $K, K', L, L' \in \CK^n_0$ with $K \subset K'$, $L \subset L'$, $p, q \in [-\infty,\infty]$ with $p \leq q$, and $A \in \R^{n \times n}$ invertible.
Then:
\begin{enumerate}[(i)]
\item $(\upmean{p}{K}{L})^\circ = \lowmean{-p}{K^\circ}{L^\circ}$ and $(\lowmean{p}{K}{L})^\circ = \upmean{-p}{K^\circ}{L^\circ}$,
\item $\lowmean{p}{K}{L} \subset \upmean{p}{K}{L}$,
\item 	$\upmean{p}{K}{L} \subset \upmean{q}{K'}{L'}$ and $\lowmean{p}{K}{L} \subset \lowmean{q}{K'}{L'}$, and
\item $\upmean{p}{A K}{A L} = A \cdot \upmean{p}{K}{L}$ and $\lowmean{p}{A K}{A L} = A \cdot \lowmean{p}{K}{L}$.
\end{enumerate}
\end{lemma}

Note that the invertibility assumption for $A$ is necessary. For example, it is not difficult to see that if $A$ is an orthogonal projection, the equality $A (K \cap L) = (A K) \cap (A L)$ may fail.

\begin{proof}
For (i), we observe that
\begin{align*}
	\left( \upmean{p}{K}{L} \right)^\circ
		& = \left( \bigcap_{a \in \mS_2} H_{(a,m_p(h_K(a),h_L(a)))}^\leq \right)^\circ
		= \cl \left( \conv \left( \bigcup_{a \in \mS_2} \left( H_{(a,m_p(h_K(a),h_L(a)))}^\leq \right)^\circ \right) \right) \\
		& = \cl \left( \conv \left( \left\{ \frac{a}{m_p(h_K(a),h_L(a))} : a \in \mS_2 \right\} \right) \right).
\end{align*}
The set that we take the convex hull and closure
of in the last step is compact by compactness of $\mS_2$ and continuity of $m_p(h_K,h_L)$.
Thus, we may drop the closure here.
Since $h_K = \gauge{\cdot}_{K^\circ}$ and $h_L = \gauge{\cdot}_{L^\circ}$ by Proposition~\ref{prop:support_gauge_fct}~(i),
we conclude that $(\upmean{p}{K}{L})^\circ = \lowmean{-p}{K^\circ}{L^\circ}$.
Now, (i) follows from
$(\lowmean{p}{K}{L})^\circ = (\lowmean{-(-p)}{(K^\circ)^\circ}{(L^\circ)^\circ})^\circ
= ((\upmean{-p}{K^\circ}{L^\circ})^\circ)^\circ = \upmean{-p}{K^\circ}{L^\circ}$.

For (ii), let $x, a \in \mS_2$.
Then $a^T x \leq 0$, or $a^T x > 0$ and
\[
	a^T \frac{x}{m_{-p}(\gauge{x}_K,\gauge{x}_L)}
    = \frac{1}{m_{-p} \left( \frac{\gauge{x}_K}{a^T x},\frac{\gauge{x}_L}{a^T x} \right)}
	= m_p \left( a^T \normal{x}{K},a^T \normal{x}{L} \right)
	\leq m_p(h_K(a),h_L(a)).
\]
Since $a \in \mS_2$ has been chosen arbitrarily, we obtain
$x/m_{-p}(\gauge{x}_K,\gauge{x}_L) \in \upmean{p}{K}{L}$.
Because $x \in \mS_2$ has also been chosen arbitrarily and $\upmean{p}{K}{L}$ is clearly convex, (ii) is proven.

(iii) is an immediate consequence of the monotonicity of $p$-means of real numbers.

Finally, for (iv), let $x \in \R^n$.
Then $x$ lies in $\upmean{p}{A K}{A L}$ if and only if $a^T x \leq m_p(h_{A K}(a),h_{A L}(a))$ for all $a \in \R^n$.
This is equivalent to $(A^T a)^T (A^{-1} x) \leq m_p(h_K(A^T a),h_L(A^T a))$ for all $a \in \R^n$,
i.e., $A^{-1} x \in \upmean{p}{K}{L}$.
Consequently, $\upmean{p}{A K}{A L} = A \cdot \upmean{p}{K}{L}$.
Since
$(A C)^\circ = A^{-T} C^\circ$ for all $C \in \CK^n_0$,
we also obtain the second equality in (iv) from (i).
\end{proof}

\section{\texorpdfstring{$p$}{p}-Means of Polytopes}
\label{sec:polytopes}

The goal of this section is to show that certain $p$-means of polytopes are guaranteed to again be polytopes,
that is, we verify Theorem~\ref{thm:polytopes}.
While the polytopal representations provided in the theorem may be irreducible in general,
it is possible to reduce them further in selected cases.
We provide such an example at the end of the section,
which is important for the analysis of some equality cases between $p$-means in Section~\ref{sec:equality_cases}.

\begin{proof}[Proof of Thereom~\ref{thm:polytopes}]
We only need to show that $\lowmean{-p}{K}{L}$ is a polytope
with the claimed representation
for polytopes $K,L \in \CK^n_0$ and $p \in [-\infty,1]$.
Under these conditions, $K^\circ$ and $L^\circ$ are also polytopes
and therefore, by Lemma~\ref{lem:p-mean_basics}~(i),
$\upmean{p}{K}{L} = \left( \lowmean{-p}{K^\circ}{L^\circ} \right)^\circ$
is a polytope with the claimed representation for upper $p$-means as well.

Let $a,b \in \R^n \setminus \{0\}$ such that $K \cap H_{(a,1)}$ and $L \cap H_{(b,1)}$ are facets of the respective polytopes
and write $F_K \coloneqq \pos(K \cap H_{(a,1)})$ and $F_L \coloneqq \pos(L \cap H_{(b,1)})$.
It is easy to see for $v \in F_K$ that
\begin{equation}
\label{eq:local_polytope_gauge}
	\gauge{v}_K
    = a^T v.
\end{equation}
Now, let $v,w \in (F_K \cap F_L) \setminus \{ 0 \}$.
We define $\bar{v} \coloneqq m_{-p}(\gauge{v}_K^{-1}, \gauge{v}_L^{-1}) v$ and $\bar{w} \coloneqq m_{-p}(\gauge{w}_K^{-1}, \gauge{w}_L^{-1}) w$.
For $\lambda \in [0,1]$, let $x \coloneqq \lambda \bar{v} + (1 - \lambda) \bar{w} \in \lowmean{-p}{K}{L}$.
We claim that
\[
	m_{-p}(\gauge{x}_K^{-1}, \gauge{x}_L^{-1}) x \in [0, x],
\]
which is equivalent to
\begin{equation}
\label{eq:polytope_required_ineq}
	1 \leq m_p(\gauge{\lambda \bar{v} + (1 - \lambda) \bar{w}}_K, \gauge{\lambda \bar{v} + (1 - \lambda) \bar{w}}_L)
\end{equation}
by Remark~\ref{rem:mp}.
Using \eqref{eq:local_polytope_gauge} and again Remark~\ref{rem:mp}, the right-hand side can be rewritten as
\[
	m_p \left( \frac{\lambda a^T v}{m_p(a^T v, b^T v)} + \frac{(1 - \lambda) a^T w}{m_p(a^T w, b^T w)},
		\frac{\lambda b^T v}{m_p(a^T v, b^T v)} + \frac{(1 - \lambda) b^T w}{m_p(a^T w, b^T w)} \right).
\]
The concavity of $m_p$ for $p \in [-\infty,1]$ (see Remark~\ref{rem:mp})
shows that this term is lower bounded by
\begin{align*}
    & \lambda m_p \left( \frac{a^T v}{m_p(a^T v, b^T v)}, \frac{b^T v}{m_p(a^T v, b^T v)} \right)
    + (1-\lambda) m_p \left( \frac{a^T w}{m_p(a^T w, b^T w)}, \frac{b^T w}{m_p(a^T w, b^T w)} \right)
    \\
    & = \lambda \frac{m_p(a^T v,b^T v)}{m_p(a^T v, b^T v)}
    + (1-\lambda) \frac{m_p(a^T w,b^T w)}{m_p(a^T w, b^T w)}
    = 1,
\end{align*}
which yields the claimed inequality \eqref{eq:polytope_required_ineq}.

The above shows that
\begin{align*}
	& \conv \left( \left\{ 0 \right\} \cup \left\{ m_{-p}(\gauge{v}_K^{-1}, \gauge{v}_L^{-1}) v :
			v \in \mS_2 \cap F_K \cap F_L \right\} \right) \\
	& = \conv \left( \left\{ 0 \right\} \cup \left\{ m_{-p}(\gauge{v}_K^{-1}, \gauge{v}_L^{-1}) v : v \in \ray(F_K \cap F_L) \right\} \right).
\end{align*}
Any $v \in \mS_2$ lies in $F_K \cap F_L$ like above for some facets of $K$ and $L$,
so the claimed representation of $\lowmean{-p}{K}{L}$ follows.
Since $\mathcal{F}_K$ and $\mathcal{F}_L$ are finite,
and each of the sets $\ray(F_K \cap F_L)$ is finite as the set of extreme rays of a polyhedral cone,
$\lowmean{-p}{K}{L}$ is a polytope.
\end{proof}

For $n = 2$ and $p \in [-\infty,1]$,
Theorem~\ref{thm:polytopes} shows that $\lowmean{-p}{K}{L}$
has a particularly simple structure if $K$ and $L$ are polytopes.
First, note that $\normal{v}{2}$ lies in $\bigcup_{F_K \in \mathcal{F}_K, F_L \in \mathcal{F}_L} \ray(F_K \cap F_L)$ for any $v \in \ext(K)$ (even if $n \geq 3$):
Simply take any $F_K \in \mathcal{F}_K$ and $F_L \in \mathcal{F}_L$ that contain $v$.
Then $[0,\infty) v$ is an extreme ray of $F_K$ and therefore also of $F_K \cap F_L$.
The same argument can be used for any $v \in \ext(L)$.
Conversely, for $n = 2$ it is easy to see that the sets $\ray(F_K \cap F_L)$ consist of at most two elements of $\mS_2 \cap \pos(\ext(K) \cup \ext(L))$.
In summary, we have that the vertices of $\lowmean{-p}{K}{L}$ are simply dilates
of the vertices of $K$ and $L$ in this case.
Using $\upmean{p}{K}{L} = (\lowmean{-p}{K^{\circ}}{L^{\circ}})^{\circ}$,
the facet outer normals of $\upmean{p}{K}{L}$ are facet outer normals of $K$ or $L$ under the same conditions.

In the following, we present an example where the representations obtained in Theorem~\ref{thm:polytopes} can be reduced further.

\begin{example}
\label{ex:polytope_redundant}
Let $p \in (-1,\infty]$.
Then $m_p(2,\frac{2}{3}) > m_{-1}(2,\frac{2}{3}) = 1$.
By continuity, there exists $\gamma \in (1,2)$ close to $2$ with $m_p(\gamma,\frac{\gamma}{\gamma+1}) > 1$.
For $n \geq 2$, we define
\[
	K \coloneqq \conv( \{u^n, \pm u^i, \pm \gamma u^i - u^n : i \in [n-1] \} ) \in \CK^n_0
\]
(cf.~Figure~\ref{fig:polytope_redundant}).

\begin{figure}[ht]
\def\scale{2}
\begin{tikzpicture}[scale=\scale, line join=round]
\def\g{(16/9)}

\draw[dotted] (0,0) -- ({\g},1);
\draw[dotted] (0,0) -- ({-\g},1);

\draw[blue,thick] ({\g},-1) -- ({\g/(\g+1)},{1/(\g+1)});
\draw[blue,thick] ({-\g},-1) -- ({-\g/(\g+1)},{1/(\g+1)});

\draw[thick] ({-\g},-1) -- ({\g},-1) -- (1,0) -- (0,1) -- (-1,0) -- ({-\g},-1);

\fill ({-\g},-1) circle [radius=\ds];
\fill ({\g},-1) circle [radius=\ds];
\fill (-1,0) circle [radius=\ds];
\fill (1,0) circle [radius=\ds];
\fill (0,1) circle [radius=\ds];
\fill[blue] ({\g/(\g+1)},{1/(\g+1)}) circle [radius=\ds];
\fill[blue] ({-\g/(\g+1)},{1/(\g+1)}) circle [radius=\ds];

\draw[dashed,thick] ({-\g},1) -- ({\g},1);
\draw[dashed,thick] ({\g},1) -- (1,0) -- (0,-1);
\draw[dashed,thick] (0,-1) -- (-1,0) -- ({-\g},1);

\draw[orange,thick] (0,-1) -- ({\g/sqrt(\g+1)},{-1/sqrt(\g+1)}) -- ({\g/sqrt(\g+1)},{1/sqrt(\g+1)}) -- (0,1)
				-- ({-\g/sqrt(\g+1)},{1/sqrt(\g+1)}) -- ({-\g/sqrt(\g+1)},{-1/sqrt(\g+1)}) -- cycle;

\fill (0,0) circle [radius=\ds] node[anchor=north west] {$0$};
\end{tikzpicture}
\caption{
The bodies considered in Example~\ref{ex:polytope_redundant} for $n = 2$, $p = 0$, and $\gamma = \frac{16}{9}$:
$K$ (black), $-K$ (dashed), $K'$ (blue), $\lowmean{p}{K}{-K}$ (orange).
}
\label{fig:polytope_redundant}
\end{figure}

Applying Theorem~\ref{thm:polytopes}, one obtains (with some calculations) that
\begin{equation}
\label{eq:ex_polytope_vrep}
	\lowmean{p}{K}{-K}
	= \conv \left( \left\{ \pm u^n, \pm u^i, \frac{\pm \gamma u^i \pm u^n}{m_{-p}(\gauge{\gamma u^1 + u^n}_K,1)} : i \in [n-1] \right\} \right).
\end{equation}
By $\frac{\gamma}{\gamma+1} u^1 + \frac{1}{\gamma+1} u^n \in [u^1,u^n] \subset K$, it holds $\gauge{\gamma u^1 + u^n}_K \leq \gamma + 1$.
Additionally, $K \subset H_{(u^1+u^n,1)}^\leq$ yields $\gauge{\gamma u^1 + u^n}_K \geq (u^1 + u^n)^T (\gamma u^1 + u^n) = \gamma +1$.
Altogether, with Remark~\ref{rem:mp}, we can simplify
\[
	\frac{\pm \gamma u^i \pm u^n}{m_{-p}(\gauge{\gamma u^1 + u^n}_K,1)}
		= \frac{\pm \gamma u^i \pm u^n}{m_{-p}(\gamma + 1,1)}
		= \pm m_p \left( \gamma,\frac{\gamma}{\gamma+1} \right) u^i \pm m_p \left( 1,\frac{1}{\gamma + 1} \right) u^n.
\]
By $m_p(\gamma,\frac{\gamma}{\gamma+1}) > 1$, we obtain $\pm u^i \in \inte(\lowmean{p}{K}{-K})$.
Thus, the example shows that the representation of $p$-means obtained
from Theorem~\ref{thm:polytopes} may be redundant,
even if we choose $L = -K$.

We discuss some additional properties of this example, which are used later on. With
\[
	K' \coloneqq \conv \left( \left\{ u^n, \pm \gamma u^i - u^n, \frac{\pm \gamma u^i + u^n}{\gamma + 1} :
					i \in [n-1] \right\} \right)
	\subsetneq K
\]
it holds $\lowmean{p}{K'}{-K'} \subset \lowmean{p}{K}{-K}$ by Lemma~\ref{lem:p-mean_basics}~(iii),
and $\lowmean{p}{K}{-K} \subset \lowmean{p}{K'}{-K'}$
since all points in the right-hand set in \eqref{eq:ex_polytope_vrep} are by definition contained in $\lowmean{p}{K'}{-K'}$.
In summary, we have $\lowmean{p}{K'}{-K'} = \lowmean{p}{K}{-K}$.
This shows that $\underline{M}_p$ is not strictly increasing in its two set-arguments for $p \in (-1,\infty]$ and $n \geq 2$.
Considering the polars of the bodies constructed above yields by Lemma~\ref{lem:p-mean_basics}~(i) that $\overline{M}_p$ is also not strictly increasing for $p \in [-\infty,1)$ and $n \geq 2$.

In contrast, we show $\upmean{p}{K'}{-K'} \subsetneq \upmean{p}{K}{-K}$ for $p \in [-\infty,1)$ (and, again using Lem\-ma~\ref{lem:p-mean_basics}~(i), also
$\lowmean{p}{K^\circ}{-K^\circ} \subsetneq \lowmean{p}{(K')^\circ}{-(K')^\circ}$ when $p \in (-1,\infty]$).
We have
\begin{align*}
    h_{\upmean{p}{K'}{-K'}}((\gamma+2) u^1 + \gamma^2 u^n)
    & \leq m_p(h_{K'}((\gamma+2) u^1 + \gamma^2 u^n),h_{-K'}((\gamma+2) u^1 + \gamma^2 u^n))
    \\
    & = m_p(2 \gamma, 2 \gamma (\gamma + 1)),
\end{align*}
so $\lambda u^1 \notin \upmean{p}{K'}{-K'}$ for any $\lambda > \frac{2 \gamma}{\gamma + 2} m_p(1, \gamma+1)$.
Theorem~\ref{thm:polytopes} can be used to show that all facets of $\upmean{p}{K}{-K}$ are parallel to facets of $K$ or $-K$,
which yields $m_p(1,2\gamma-1) u^1 \in \upmean{p}{K}{-K}$ after some computation.
Finally,
\[
    m_p \left( \frac{1}{\gamma+2}, \frac{\gamma+1}{\gamma+2} \right)
    < m_p \left( \frac{1}{2\gamma}, \frac{2\gamma - 1}{2\gamma} \right)
\]
from the concavity of $m_p$ and $\frac{1}{\gamma+2} < \frac{1}{2\gamma} < \frac{1}{2}$ verifies $m_p(1, 2\gamma - 1) u^1 \notin \upmean{p}{K'}{-K'}$. Hence,
$\upmean{p}{K'}{-K'} \subsetneq \upmean{p}{K}{-K}$ is true as claimed.
\end{example}

\section{Common Boundary Structures}
\label{sec:boundary}

Our goal in this section is to provide some technical details on the common boundary points and common supporting hyperplanes of the $p$-means in the connected double chain \eqref{eq:forward_double_chain}.
These insights are important for our proof of Theorem~\ref{thm:same_p} in Section~\ref{sec:equality_cases}, as well as some details about equality cases in Section~\ref{sec:radii}.
Moreover, they allow us to generalize a result about the optimality of the containment between the standard means in \eqref{eq:forward_double_chain} presented in \cite{BrDiMe} to the full double chain, while also revealing some new aspects of the optimal containment within the individual chains of the upper and lower $p$-means.
These results about optimal containment are presented at the end of this section.

The following two technical lemmas address the question of when
the unique boundary points of two $p$-means on a ray $[0,\infty) x$ for $x \in \R^n \setminus \{ 0 \}$ coincide.
This is equivalent to the equality of the respective gauge function values.
By Proposition~\ref{prop:support_gauge_fct}~(i)
and Lemma~\ref{lem:p-mean_basics}~(i), this also yields results on the equality of support functions, which we do not state explicitly.

We begin by comparing upper and lower $p$-means for the same finite $p$-value.
The cases for $p = \pm \infty$ are covered by Theorem~\ref{thm:same_p},
which we prove in the next section.
The containments from right to left and the implication from (iii) to (i) and (ii) in the following lemma
are for $p = 0$ also shown in \cite[Proposition~15]{MiRog1}.
Note that while the second set is clearly contained in the first one, the reverse inclusion is non-trivial for $p < 1$
since $h_{\upmean{p}{K}{L}}(a) < m_p(h_K(a),h_L(a))$ is possible for general $a \in \R^n$ in that case.

\begin{lemma}
\label{lem:common_boundary_same_p}
Let $K,L \in \CK^n_0$, $p \in \R$, and $x \in \R^n \setminus \{ 0 \}$.
Then
\begin{align*}
	N_{\upmean{p}{K}{L}} \left( \normal{x}{\lowmean{p}{K}{L}} \right)
		& = \left\{ a \in \R^n : a^T \normal{x}{\lowmean{p}{K}{L}} = m_p(h_K(a),h_L(a)) \right\} \\
		& = N_K \left( \normal{x}{K} \right) \cap N_L \left( \normal{x}{L} \right).
\end{align*}
Moreover, the following are equivalent:
\begin{enumerate}[(i)]
\item $\gauge{x}_{\lowmean{p}{K}{L}} = \gauge{x}_{\upmean{p}{K}{L}}$.
\item $\gauge{x}_{\upmean{p}{K}{L}} = m_{-p}(\gauge{x}_K,\gauge{x}_L)$.
\item  $N_K \left( \normal{x}{K} \right) \cap N_L \left( \normal{x}{L} \right) \supsetneq \{ 0 \}$.
\end{enumerate}
\end{lemma} \begin{proof}
By Lemma~\ref{lem:p-mean_basics}~(ii) and the definition of $\lowmean{p}{K}{L}$, we have
\[
	\gauge{x}_{\upmean{p}{K}{L}} \leq \gauge{x}_{\lowmean{p}{K}{L}} \leq m_{-p}(\gauge{x}_K,\gauge{x}_L).
\]
This already yields the implication from (ii) to (i).
The definition of $\upmean{p}{K}{L}$ now implies for any $a \in \R^n$ that $a^T x \leq 0 < m_p(h_K(a),h_L(a))$, or $a^T x > 0$ and
\begin{equation}
\label{eq:inner_product_chain}
	a^T \frac{x}{m_{-p}(\gauge{x}_K,\gauge{x}_L)}
	\leq a^T \normal{x}{\lowmean{p}{K}{L}}
	\leq a^T \normal{x}{\upmean{p}{K}{L}}
	\leq h_{\upmean{p}{K}{L}}(a)
	\leq m_p(h_K(a),h_L(a)).
\end{equation}
If $a \in N_K \left( \normal{x}{K} \right) \cap N_L \left( \normal{x}{L} \right)$, we additionally obtain
\[
	m_p(h_K(a),h_L(a))
		= m_p \left( a^T \normal{x}{K}, a^T \normal{x}{L} \right)
		= a^T \frac{x}{m_{-p}(\gauge{x}_K,\gauge{x}_L)},
\]
and therefore equality in the full chain in \eqref{eq:inner_product_chain} as well.
Hence, the implication from (iii) to (ii) and the set inclusions from right to left in the lemma follow.

Next, we assume $\gauge{x}_{\lowmean{p}{K}{L}} = \gauge{x}_{\upmean{p}{K}{L}}$.
By the definition of $\lowmean{p}{K}{L}$,
there exist some $v^1, \ldots, v^k \in \mS_2$ and $\lambda_1, \ldots, \lambda_k > 0$ such that
$\sum_{i \in [k]} \lambda_i = 1$ and
\begin{equation}
\label{eq:common_boundary_y}
	y \coloneqq \normal{x}{\lowmean{p}{K}{L}}
		= \sum_{i \in [k]} \lambda_i  \frac{v^i}{m_{-p}(\gauge{v^i}_K, \gauge{v^i}_L)}.
\end{equation}
It follows directly from the definition of $\upmean{p}{K}{L}$ that
\[
    \sup \left\{ \frac{a^T x}{m_p(h_K(a), h_L(a))} : a \in \mS_2 \right\}
    = \gauge{x}_{\upmean{p}{K}{L}},
\]
where the supremum is attained for some $\bar{a} \in \mS_2$ by continuity in $a$.
Therefore, by the assumption $\gauge{x}_{\lowmean{p}{K}{L}} = \gauge{x}_{\upmean{p}{K}{L}}$,
there exists $\bar{a} \in \mS_2$ with $\bar{a}^T y = m_p(h_K(\bar{a}),h_L(\bar{a}))$.
Since for all $i \in [k]$
\[
	y^i
		\coloneqq \frac{v^i}{m_{-p}(\gauge{v^i}_K, \gauge{v^i}_L)}
		\in \lowmean{p}{K}{L}
		\subset \upmean{p}{K}{L}
		\subset H_{(\bar{a},m_p(h_K(\bar{a}),h_L(\bar{a})))}^\leq
		= H_{(\bar{a},\bar{a}^T y)}^\leq,
\]
all $y^i$ must, because of \eqref{eq:common_boundary_y}, even belong to $H_{(\bar{a},\bar{a}^T y)}$.
Hence,
\[
	m_p(h_K(\bar{a}), h_L(\bar{a}))
		= \bar{a}^T y
		= \bar{a}^T \frac{v^i}{m_{-p}(\gauge{v^i}_K, \gauge{v^i}_L)}
		= m_p \left( \bar{a}^T \frac{v^i}{\gauge{v^i}_K}, \bar{a}^T \frac{v^i}{\gauge{v^i}_L} \right).
\]
Since $p \neq \pm \infty$, we obtain $\bar{a}^T \normal{v^i}{K} = h_K(\bar{a})$ and $\bar{a}^T \normal{v^i}{L} = h_L(\bar{a})$.
With $\mu_i \coloneqq \frac{\lambda_i \gauge{v^i}_K}{m_{-p}(\gauge{v^i}_K,\gauge{v^i}_L)} > 0$, this yields
\[
	\frac{x}{\sum_{j \in [k]} \mu_j \gauge{x}_{\lowmean{p}{K}{L}}}
	= \sum_{i \in [k]} \frac{\mu_i}{\sum_{j \in [k]} \mu_j} \cdot \normal{v^i}{K}
	\in \conv \left( \left\{ \normal{v^1}{K}, \ldots, \normal{v^k}{K} \right\} \right)
	\subset K \cap H_{(\bar{a},h_K(\bar{a}))}.
\]
It follows $\sum_{j \in [k]} \mu_j \gauge{x}_{\lowmean{p}{K}{L}} = \gauge{x}_K$ and $\bar{a} \in N_K \left( \normal{x}{K} \right)$.
We can argue analogously for $\bar{a} \in N_L \left( \normal{x}{L} \right)$.
This shows the implication from (i) to (iii) and that the second set in the lemma is contained in the third one,
as any $a \in \R^n \setminus \{ 0 \}$ with $a^T \normal{x}{\lowmean{p}{K}{L}} = m_p(h_K(a),h_L(a))$ can be rescaled to lie in $\mS_2$ and thus be a valid choice for $\bar{a}$.

Finally, assume $a \in N_{\upmean{p}{K}{L}} \left( \normal{x}{\lowmean{p}{K}{L}} \right)$.
Then, with Proposition~\ref{prop:support_gauge_fct}~(i) and Lemma~\ref{lem:p-mean_basics}~(i),
\[
	\gauge{a}_{\lowmean{-p}{K^\circ}{L^\circ}} = h_{\upmean{p}{K}{L}}(a) = h_{\lowmean{p}{K}{L}}(a) = \gauge{a}_{\upmean{-p}{K^\circ}{L^\circ}},
\]
which by the already proven equivalence of (i) and (ii) yields
\[
	a^T \normal{x}{\lowmean{p}{K}{L}}
		= h_{\upmean{p}{K}{L}}(a)
		= \gauge{a}_{\upmean{-p}{K^\circ}{L^\circ}}
		= m_p(\gauge{a}_{K^\circ},\gauge{a}_{L^\circ})
		= m_p(h_K(a),h_L(a)).
\]
Thus, the first set in the lemma is contained in the second one, which completes the proof.
\end{proof}

Next, we compare $p$-means for different $p$-values.

\begin{lemma}
\label{lem:common_boundary_different_p}
Let $K,L \in \CK^n_0$, $p, q \in [-\infty,\infty]$ with $p < q$, and $x \in \R^n \setminus \{ 0 \}$.
Then
\begin{enumerate}[(i)]
\item $\gauge{x}_{\lowmean{p}{K}{L}} = \gauge{x}_{\lowmean{q}{K}{L}}$ if and only if
		$\gauge{x}_K = \gauge{x}_L = \gauge{x}_{\lowmean{q}{K}{L}}$.
\item $\gauge{x}_{\upmean{p}{K}{L}} = \gauge{x}_{\upmean{q}{K}{L}}$ if and only if
		$N_{\upmean{q}{K}{L}} \left( \normal{x}{\upmean{p}{K}{L}} \right) \supsetneq \{ 0 \}$, where
\begin{align*}
	N_{\upmean{q}{K}{L}} \left( \normal{x}{\upmean{p}{K}{L}} \right)
					&= \left\{ a \in \R^n : a^T \normal{x}{\upmean{p}{K}{L}} = m_q(h_K(a),h_L(a)) \right\} \\
					&= \left\{ a \in \R^n : a^T \normal{x}{\upmean{p}{K}{L}} = h_K(a) = h_L(a) \right\}.
\end{align*}
\item $\gauge{x}_{\lowmean{p}{K}{L}} = \gauge{x}_{\upmean{q}{K}{L}}$ if and only if $\gauge{x}_K = \gauge{x}_L = \gauge{x}_{\upmean{q}{K}{L}}$ and
\[
	N_{\upmean{q}{K}{L}} \left( \normal{x}{\lowmean{p}{K}{L}} \right)
		= N_{\upmean{\infty}{K}{L}} \left( \normal{x}{\lowmean{p}{K}{L}} \right)
		= N_K \left( \normal{x}{K} \right) \cap N_L \left( \normal{x}{L} \right)
		\supsetneq \{ 0 \}.
\]
\end{enumerate}
\end{lemma}

Let us point out that while for $q \in [-\infty,-1]$ the equality $\gauge{x}_K = \gauge{x}_L$
always also implies $\gauge{x}_K = \gauge{x}_L = \gauge{x}_{\lowmean{q}{K}{L}}$,
this is not necessarily the case for $q > -1$ (cf.~Figure~\ref{fig:common_boundary}).
Similarly, the first set equality in (ii) is again non-trivial for $q < 1$.
Furthermore, the second set equality in (ii) does not imply that $\normal{x}{\upmean{p}{K}{L}}$ lies in $K$ or $L$
(cf.~Figure~\ref{fig:common_boundary}).

\begin{figure}[ht]
\def\scale{1.25}
\begin{tikzpicture}[scale=\scale, line join=round]
\def\p{3}

\draw[thick] ({-sqrt(3)},-1) -- ({sqrt(3)},-1) -- (0,2) -- cycle;

\draw[dashed,thick] ({sqrt(3)},1) -- ({-sqrt(3)},1);
\draw[dashed,thick] ({-sqrt(3)},1) -- (0,-2);
\draw[dashed,thick] (0,-2) -- ({sqrt(3)},1);

\draw[dashdotted,thick] (0,-2) -- ({-sqrt(3)},-1);
\draw[dashdotted,thick] (0,-2) -- ({sqrt(3)},-1);
\draw[dashdotted,thick] ({sqrt(3)},1) -- ({sqrt(3)},-1);
\draw[dashdotted,thick] ({sqrt(3)},1) -- (0,2);
\draw[dashdotted,thick] ({-sqrt(3)},1) -- (0,2);
\draw[dashdotted,thick] ({-sqrt(3)},1) -- ({-sqrt(3)},-1);

\draw[red,thick,variable=\t,domain={1/(2^(\p-1)+1)}:{2^(\p-1)/(2^(\p-1)+1)}] 
		plot	(	{-sqrt(3)*(1-\t) / (2^(1/\p) * (\t^(\p/(\p-1))+(1-\t)^(\p/(\p-1)))^((\p-1)/\p))},
				{(-(1-\t) -2*\t) / (2^(1/\p) * (\t^(\p/(\p-1))+(1-\t)^(\p/(\p-1)))^((\p-1)/\p))}	)
	--	plot	(	{sqrt(3)*\t / (2^(1/\p) * (\t^(\p/(\p-1))+(1-\t)^(\p/(\p-1)))^((\p-1)/\p))},
				{(-2*(1-\t) - \t) / (2^(1/\p) * (\t^(\p/(\p-1))+(1-\t)^(\p/(\p-1)))^((\p-1)/\p))}	)
	--	plot	(	{sqrt(3) / (2^(1/\p) * (\t^(\p/(\p-1))+(1-\t)^(\p/(\p-1)))^((\p-1)/\p))},
				{(2*\t -1) / (2^(1/\p) * (\t^(\p/(\p-1))+(1-\t)^(\p/(\p-1)))^((\p-1)/\p))}	)
	--	plot	(	{sqrt(3)*(1-\t) / (2^(1/\p) * (\t^(\p/(\p-1))+(1-\t)^(\p/(\p-1)))^((\p-1)/\p))},
				{((1-\t) + 2*\t) / (2^(1/\p) * (\t^(\p/(\p-1))+(1-\t)^(\p/(\p-1)))^((\p-1)/\p))}	)
	--	plot	(	{-sqrt(3)*\t / (2^(1/\p) * (\t^(\p/(\p-1))+(1-\t)^(\p/(\p-1)))^((\p-1)/\p))},
				{(2*(1-\t) + \t)) / (2^(1/\p) * (\t^(\p/(\p-1))+(1-\t)^(\p/(\p-1)))^((\p-1)/\p))}	)
	--	plot	(	{-sqrt(3) / (2^(1/\p) * (\t^(\p/(\p-1))+(1-\t)^(\p/(\p-1)))^((\p-1)/\p))},
				{(-\t + (1-\t)) / (2^(1/\p) * (\t^(\p/(\p-1))+(1-\t)^(\p/(\p-1)))^((\p-1)/\p))}	)
	--		cycle;

\draw[blue,thick] ({-sqrt(3)},0) -- ({-sqrt(3)/2},-1.5) -- ({sqrt(3)/2},-1.5) -- ({sqrt(3)},0) -- ({sqrt(3)/2},1.5) -- ({-sqrt(3)/2},1.5) -- cycle;

\draw[orange,thick] ({sqrt(3/2)},{sqrt(1/2)}) -- (0,{sqrt(2)}) -- ({-sqrt(3/2)},{sqrt(1/2)})
			-- ({-sqrt(3/2)},{-sqrt(1/2)}) -- (0,{-sqrt(2)}) -- ({sqrt(3/2)},{-sqrt(1/2)}) -- cycle;

\fill (0,0) circle [radius=\ds] node[anchor=west] {0};
\end{tikzpicture}
\caption{
An example for Lemma~\ref{lem:common_boundary_different_p}:
$K$ (black), $-K$ (dashed), $\lowmean{0}{K}{-K}$ (orange),
$\upmean{1}{K}{-K}$ (blue), $\upmean{3}{K}{-K}$ (red), $\upmean{\infty}{K}{-K}$ (dash-dotted).
The common boundary points of $K$ and $-K$ are not boundary points of $\lowmean{0}{K}{-K}$.
Furthermore, the vertices of $\upmean{1}{K}{-K}$ are smooth boundary points of $\upmean{\infty}{K}{-K}$.
By Lemma~\ref{lem:common_boundary_different_p}~(ii), $\upmean{p}{K}{-K}$ for $p > 1$ is supported at each of these points
by exactly one respective line that also supports $K$ and $-K$.
However, this does not mean that these points must belong to $K$ or $-K$.
}
\label{fig:common_boundary}
\end{figure}

\begin{proof}
For (i), if $\gauge{x}_K = \gauge{x}_L = \gauge{x}_{\lowmean{q}{K}{L}}$,
then Lemma~\ref{lem:p-mean_basics}~(iii) and the definition of $\lowmean{p}{K}{L}$ show
\[
	\gauge{x}_{\lowmean{q}{K}{L}} \leq \gauge{x}_{\lowmean{p}{K}{L}} \leq m_{-p}(\gauge{x}_K,\gauge{x}_L) = \gauge{x}_{\lowmean{q}{K}{L}},
\]
which already yields $\gauge{x}_{\lowmean{p}{K}{L}} = \gauge{x}_{\lowmean{q}{K}{L}}$.
Conversely, if we assume $\gauge{x}_{\lowmean{p}{K}{L}} = \gauge{x}_{\lowmean{q}{K}{L}}$,
then there exist $v^1, \ldots, v^k \in \mS_2$ and $\lambda_1, \ldots, \lambda_k > 0$ such that $\sum_{i \in [k]} \lambda_i = 1$ and
\[
	y \coloneqq
		\normal{x}{\lowmean{q}{K}{L}}
		= \sum_{i \in [k]} \lambda_i  \frac{v^i}{m_{-p}(\gauge{v^i}_K, \gauge{v^i}_L)}.
\]
Since $y \in \bd(\lowmean{q}{K}{L})$, there exists a hyperplane $H$ that supports $\lowmean{q}{K}{L}$ at $y$.
By $\lowmean{p}{K}{L} \subset \lowmean{q}{K}{L}$ from Lemma~\ref{lem:p-mean_basics}~(iii),
all $v^i / m_{-p}(\gauge{v^i}_K, \gauge{v^i}_L)$ must also lie in $H$,
which shows that they are boundary points of $\lowmean{q}{K}{L}$ as well.
Thus, for all $i \in [k]$ we obtain
\[
	m_{-p}(\gauge{v^i}_K, \gauge{v^i}_L)
		= \gauge{v^i}_{\lowmean{q}{K}{L}}
		\leq m_{-q}(\gauge{v^i}_K, \gauge{v^i}_L),
\]
which by $-q < -p$ implies $\gauge{v^i}_K = \gauge{v^i}_L$.
Hence, all
\[
	y^i \coloneqq \frac{v^i}{m_{-p}(\gauge{v^i}_K, \gauge{v^i}_L)}
		= \frac{v^i}{\gauge{v^i}_K}
		= \frac{v^i}{\gauge{v^i}_L}
\]
lie in $K \cap L$,
so $y = \sum_{i \in [k]} \lambda_i y^i$ also lies in $K \cap L$.
This shows
\[
	\gauge{x}_{\lowmean{q}{K}{L}}
		\leq m_{-q}(\gauge{x}_K, \gauge{x}_L)
		\leq \max \{ \gauge{x}_K, \gauge{x}_L \}
		\leq \gauge{x}_{\lowmean{q}{K}{L}}.
\]
Thus, $\gauge{x}_K = \gauge{x}_L = \gauge{x}_{\lowmean{q}{K}{L}}$ by $-q < -p \leq \infty$.

The equivalence in (ii) and the set inclusions from right to left are clear,
so it suffices to show that if $x \in \bd(\upmean{p}{K}{L})$,
then any $a \in N_{\upmean{q}{K}{L}}(x)$ satisfies $a^T x = h_K(a) = h_L(a)$.
Since, by Proposition~\ref{prop:support_gauge_fct}~(i) and Lemma~\ref{lem:p-mean_basics}~(i),

\[
	\gauge{a}_{\lowmean{-q}{K^\circ}{L^\circ}}
		= h_{\upmean{q}{K}{L}}(a)
		= h_{\upmean{p}{K}{L}}(a)
		= \gauge{a}_{\lowmean{-p}{K^\circ}{L^\circ}},
\]
(i) implies $\gauge{a}_{K^\circ} = \gauge{a}_{L^\circ} = \gauge{a}_{\lowmean{-p}{K^\circ}{L^\circ}}$.
We conclude $h_K(a) = h_L(a) = h_{\upmean{p}{K}{L}}(a) = a^T x$.

For (iii), we first observe that just $N_{\upmean{q}{K}{L}} \left( \frac{x}{\gauge{x}_{\lowmean{p}{K}{L}}} \right) \supsetneq \{0\}$ already suffices for $\gauge{x}_{\lowmean{p}{K}{L}} = \gauge{x}_{\upmean{q}{K}{L}}$.
For the other implication direction, we note that Lemma~\ref{lem:p-mean_basics} implies
\[
	\gauge{x}_{\upmean{q}{K}{L}}
		\leq \left\{ \begin{array}{c} \gauge{x}_{\lowmean{q}{K}{L}} \\ \gauge{x}_{\upmean{p}{K}{L}} \end{array} \right\}
		\leq \gauge{x}_{\lowmean{p}{K}{L}}.
\]
By assumption, equality holds from left to right.
We may therefore apply (i) to obtain
\[
	\gauge{x}_K
		= \gauge{x}_L
		= \gauge{x}_{\lowmean{q}{K}{L}}
		= \gauge{x}_{\upmean{p}{K}{L}}
		= \gauge{x}_{\upmean{q}{K}{L}}.
\]
Moreover, (ii) yields
\begin{align*}
	\{ 0 \}
		& \subsetneq N_{\upmean{q}{K}{L}} \left( \normal{x}{\upmean{p}{K}{L}} \right)
		= \left\{ a \in \R^n : a^T \normal{x}{\upmean{p}{K}{L}} = h_K(a) = h_L(a) \right\} \\
		& = N_K \left( \normal{x}{K} \right) \cap N_L \left( \normal{x}{L} \right)
		\subset N_{\upmean{\infty}{K}{L}} \left( \normal{x}{\upmean{p}{K}{L}} \right)
		\subset N_{\upmean{q}{K}{L}} \left( \normal{x}{\upmean{p}{K}{L}} \right).
\end{align*}
Thus, the claimed equality of sets follows from $\gauge{x}_{\upmean{p}{K}{L}} = \gauge{x}_{\lowmean{p}{K}{L}}$.
\end{proof}

For general $p < q$, it is not necessarily true that $\upmean{p}{K}{L} \subset \lowmean{q}{K}{L}$,
which is why Lemma~\ref{lem:common_boundary_different_p} does not cover the comparison of gauge functions in this case.
Accordingly, this combination of means requires different methods to handle when needed (see, e.g., Theorem~\ref{thm:equality_different_p}).

The following corollary collects consequences of the above lemmas,
which are sometimes more convenient to use than the rather technical lemmas themselves.

\begin{corollary}
\label{cor:common_boundary}
Let $K,L \in \CK^n_0$, $p, q \in [-\infty,\infty]$ with $p < q$, and $x \in \R^n \setminus \{ 0 \}$.
Then
\begin{enumerate}[(i)]
\item $x \in \lowmean{p}{K}{L} \cap \bd(\lowmean{q}{K}{L})$ if and only if $x \in \lowmean{-\infty}{K}{L} \cap \bd(\lowmean{q}{K}{L})$.
\item $x \in \upmean{p}{K}{L} \cap \bd(\upmean{q}{K}{L})$ if and only if $x \in \upmean{p}{K}{L} \cap \bd(\upmean{\infty}{K}{L})$.
\item $x \in \lowmean{p}{K}{L} \cap \bd(\upmean{q}{K}{L})$ if and only if $x \in \lowmean{-\infty}{K}{L} \cap \bd(\upmean{\infty}{K}{L})$.
\end{enumerate}
For $p \neq \pm\infty$, we have in addition:
\begin{enumerate}[(i)]
\item[(iv)] $\normal{x}{\lowmean{p}{K}{L}} \in \bd(\upmean{p}{K}{L})$
if and only if $N_K \left( \normal{x}{K} \right) \cap N_L \left( \normal{x}{L} \right) \allowbreak \supsetneq \{ 0 \}$.\newline
Independently of $K$ and $L$, this is always the case for at least one $x \in \R^n \setminus \{ 0 \}$.
\end{enumerate}
\end{corollary} \begin{proof}
The implications from right to left in (i)~--~(iii) are clear from Lemma~\ref{lem:p-mean_basics}~(iii);
the converses follow from (i)~--~(iii) in Lemma~\ref{lem:common_boundary_different_p}.
The equivalence in (iv) is already stated in Lemma~\ref{lem:common_boundary_same_p}.
For the claim that both statements apply for some $x$,
let $R > 0$ such that $K \subset R L$ and $x$ is a common boundary point of $K$ and $R L$.
Then there exists some $a \in \R^n \setminus \{ 0 \}$ such that $H_{(a,h_{R L}(a))}^\leq$ supports $K$ and $R L$ at $x$.
Thus, $a \in N_K \left( \normal{x}{K} \right) \cap N_L \left( \normal{x}{L} \right)$,
finishing the proof.
\end{proof}

With Lemma~\ref{lem:common_boundary_different_p} at hand,
we can now extend the initially mentioned results in \cite{BrDiMe} on the optimality of parts of \eqref{eq:forward_double_chain} to the full double-chain.
In \cite[Theorem~1.2]{BrDiMe}, it is shown that the minimum of bodies $K,L \in \CK^n_0$ is optimally contained in their maximum if and only if their harmonic mean is optimally contained in their arithmetic mean.
As a direct consequence, this remains true if $\lowmean{-1}{K}{L}$ and $\upmean{1}{K}{L}$ are respectively replaced with $\lowmean{p}{K}{L}$ and $\upmean{q}{K}{L}$ for some $p \in [-\infty,-1]$ and $q \in [1,\infty]$.
The theorem below extends this to general $p < q$, even when they are, say, on the same side of $0$.
In contrast, we cannot draw the same conclusion from $\lowmean{p}{K}{L} \subset^{opt} \upmean{p}{K}{L}$.

This extension of the equivalence could lead to progress in the study of the so-called \emph{asymmetry threshold of means} introduced in \cite{BrDiMe}.
It is defined as the largest Minkowski asymmetry of a Minkowski centered body for which its harmonic mean symmetrization is optimally contained in its arithmetic mean symmetrization,
that is,
\begin{equation}
\label{eq:threshold}
    \sup \{ s(K) : \lowmean{-1}{K}{-K} \subset^{opt} \upmean{1}{K}{-K}, K \in \CK^n_0 \text{ Minkowski centered} \}.
\end{equation}
As outlined above, this quantity does not change if $-1$ and $1$ are respectively replaced by some $p,q \in [-\infty,\infty]$ with $p < q$.

We additionally show in the theorem below that optimality at any point within the chain of either type of $p$-mean can always be extended toward an appropriate end of the chain,
though an extension toward both ends cannot be expected in general as outlined in the discussion preceding \cite[Theorem~1.5]{BrDiMe}.
In summary, we show that the equivalence in \cite[Theorem~1.2]{BrDiMe} can be extended to any pair of means from \eqref{eq:forward_double_chain} where one is strictly to the top right of the other, but not necessarily to any other pairs.

Our proof of the results about the optimality in \eqref{eq:forward_double_chain} 
requires the following characterization of optimal containment taken from \cite[Theorem~$2.3$]{BrKoe}.

\begin{proposition}
\label{prop:opt_cont}
Let $K \in \CK^n$ and $L \in \CK^n_0$.
Then $K \subset^{opt} L$ holds if and only if the following conditions are satisfied:
\begin{enumerate}[(i)]
\item $K \subset L$.
\item There exist $m \in \{2, \ldots, n+1\}$ and $a^1, \ldots, a^m \in \R^n \setminus \{0\}$ such that $h_K(a^i) = h_L(a^i)$ for every $i \in [m]$ and $0 \in \conv(\{ a^1, \ldots, a^m \})$.
\end{enumerate}
\end{proposition}

We now present our theorem on the optimal containment between general $p$-means.
Remember for infinite values of $p$ in part (iv) that Theorem~\ref{thm:same_p} asserts an even stronger statement, namely that the upper and lower $p$-mean always coincide.

\begin{theorem}
Let $K, L \in \CK^n_0$ and $p,q \in [-\infty,\infty]$ with $p < q$.
Then
\begin{enumerate}[(i)]
\item $\lowmean{p}{K}{L} \subset^{opt} \lowmean{q}{K}{L}$ if and only if $\lowmean{-\infty}{K}{L} \subset^{opt} \lowmean{q}{K}{L}$.
\item $\upmean{p}{K}{L} \subset^{opt} \upmean{q}{K}{L}$ if and only if $\upmean{p}{K}{L} \subset^{opt} \upmean{\infty}{K}{L}$.
\item $\lowmean{p}{K}{L} \subset^{opt} \upmean{q}{K}{L}$ if and only if $\lowmean{-\infty}{K}{L} \subset^{opt} \upmean{\infty}{K}{L}$.
\item $\lowmean{p}{K}{L} \subset^{opt} \upmean{p}{K}{L}$ if both means are $0$-symmetric.
This applies in particular if $K$ and $L$ are $0$-symmetric, or if $L = -K$.
\end{enumerate}
\end{theorem}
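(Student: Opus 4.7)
The plan is to apply the optimal containment characterization in Proposition~\ref{optimal containment proposition}, matching the contact points and outer normals it demands to those produced by Corollary~\ref{common boundary points of p-means corollary} and Lemma~\ref{common boundary points lemma}.

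For parts (i) and (iii), the witnesses of the two equivalent optimal containments will turn out to literally coincide. In the forward direction of (i), extract from Proposition~\ref{optimal containment proposition} contact points $x^1, \dots, x^m \in \lowmean{p}{K}{L} \cap \bd(\lowmean{q}{K}{L})$ with outer normals $a^i$ satisfying $0 \in \conv\{a^1, \dots, a^m\}$. By Corollary~\ref{common boundary points of p-means corollary}~(i) each $x^i$ already lies in $\lowmean{-\infty}{K}{L}$, so the same $(x^i, a^i)$ witness $\lowmean{-\infty}{K}{L} \subset^{opt} \lowmean{q}{K}{L}$; the reverse direction is immediate from the chain $\lowmean{-\infty}{K}{L} \subset \lowmean{p}{K}{L} \subset \lowmean{q}{K}{L}$. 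Part (iii) follows by the same argument, replacing Corollary~\ref{common boundary points of p-means corollary}~(i) by its part~(iii).

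Part (ii) is the most delicate step. In the forward direction I need to transfer a halfspace $H_{(a^i, 1)}^{\leq}$ supporting $\upmean{q}{K}{L}$ at $x^i$ to one supporting the possibly strictly larger set $\upmean{\infty}{K}{L}$ at $x^i$. The contact points lie on $\bd(\upmean{p}{K}{L})$ as well (since $R(\upmean{p}{K}{L}, \upmean{q}{K}{L}) = 1$), so Lemma~\ref{common boundary points lemma}~(ii) applies and forces $(a^i)^T x^i = h_K(a^i) = h_L(a^i) = h_{\upmean{\infty}{K}{L}}(a^i)$. Hence $H_{(a^i, 1)}^{\leq}$ does support $\upmean{\infty}{K}{L}$ at $x^i$, and Corollary~\ref{common boundary points of p-means corollary}~(ii) places $x^i \in \bd(\upmean{\infty}{K}{L})$, completing the set of witnesses. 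The converse follows directly from $\upmean{p}{K}{L} \subset \upmean{q}{K}{L} \subset \upmean{\infty}{K}{L}$: any contact data for the outer inclusion automatically serves the inner one.

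For part (iv), we already have $\lowmean{p}{K}{L} \subset \upmean{p}{K}{L}$ by Lemma~\ref{basic properties of p-means lemma}~(ii). Corollary~\ref{common boundary points of p-means corollary}~(iv) produces a common boundary point $y \in \lowmean{p}{K}{L} \cap \bd(\upmean{p}{K}{L})$; pick a nonzero $a$ in the outer normal cone of $\upmean{p}{K}{L}$ at $y$, rescaled so that $a^T y = 1$. If both means are $0$-symmetric, then $-y$ is likewise a common boundary point and $-a$ is an outer normal there, so Proposition~\ref{optimal containment proposition} applies with $m = 2$, $(x^1, x^2) = (y, -y)$, $(a^1, a^2) = (a, -a)$, and $0 = \tfrac{1}{2} a + \tfrac{1}{2}(-a)$. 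The two special cases follow directly from Definition~\ref{p-means definition}: if $K$ and $L$ are both $0$-symmetric then $h_K, h_L$ are even, and if $L = -K$ then $h_L(a) = h_K(-a)$, so in either case both $p$-means inherit $0$-symmetry about the origin. I expect the halfspace transfer in part (ii) to be the main technical subtlety, since it hinges on the less routine second equality in Lemma~\ref{common boundary points lemma}~(ii) rather than on mere set inclusions.
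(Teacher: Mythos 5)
Your overall strategy is the same as the paper's: translate $\subset^{opt}$ via Proposition~\ref{optimal containment proposition}, then move the contact points (and, where necessary, the supporting halfspaces) from one pair of means to the other via Corollary~\ref{common boundary points of p-means corollary} and Lemma~\ref{common boundary points lemma}. Parts (i) and (ii) are handled correctly, and in (ii) you rightly identify that a halfspace supporting $\upmean{q}{K}{L}$ at a contact point must be re-justified as a supporting halfspace of the larger set $\upmean{\infty}{K}{L}$, and you do this by invoking Lemma~\ref{common boundary points lemma}~(ii).

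However, part (iii) has exactly the same issue, and your treatment misses it. You declare (iii) to "follow by the same argument" as (i) with Corollary~\ref{common boundary points of p-means corollary}~(iii) in place of~(i), but in (i) the outer set $\lowmean{q}{K}{L}$ is unchanged, so the normals transfer trivially; in (iii) the outer set changes from $\upmean{q}{K}{L}$ to the potentially strictly larger $\upmean{\infty}{K}{L}$, so the outer normals $a^i$ of $\upmean{q}{K}{L}$ do not automatically remain outer normals of $\upmean{\infty}{K}{L}$. The Corollary only places the contact points in $\lowmean{-\infty}{K}{L} \cap \bd(\upmean{\infty}{K}{L})$; it says nothing about normal cones. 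You need the stronger conclusion of Lemma~\ref{common boundary points lemma}~(iii), namely $N_{\upmean{q}{K}{L}}\bigl(\normal{x}{\lowmean{p}{K}{L}}\bigr) = N_{\upmean{\infty}{K}{L}}\bigl(\normal{x}{\lowmean{p}{K}{L}}\bigr)$, to carry the halfspaces across — this is precisely what the paper cites.

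A second, smaller gap is in part (iv): Corollary~\ref{common boundary points of p-means corollary}~(iv), which you use to produce the common boundary point, is stated only for $p \neq \pm\infty$. For $p = \pm\infty$ you need a separate observation, namely that $\lowmean{p}{K}{L} = \upmean{p}{K}{L}$ by Theorem~\ref{equality of upper and lower p-mean theorem}, which makes the optimal containment trivial in that case. Everything else in (iv) — the two-point witness $\{y,-y\}$ with normals $\{a,-a\}$ under $0$-symmetry, and the verification that $0$-symmetric $K,L$ or $L = -K$ force both means to be $0$-symmetric — is fine.
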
 \begin{proof}
The implications from right to left in the first three cases are direct consequences of Lem\-ma~\ref{lem:p-mean_basics}~(iii).
We turn to their converses:
For (i), Lemma~\ref{lem:common_boundary_different_p}~(i) shows that if $x \in \R^n$
is a common boundary point of $\lowmean{p}{K}{L}$ and $\lowmean{q}{K}{L}$,
then $x$ also lies in $\lowmean{-\infty}{K}{L}$.
Hence, any halfspace that supports $\lowmean{p}{K}{L}$ and $\lowmean{q}{K}{L}$ at $x$ also supports $\lowmean{-\infty}{K}{L}$ at $x$.
Now, Proposition~\ref{prop:opt_cont} implies (i).

For (ii), Lemma~\ref{lem:common_boundary_different_p}~(ii) shows that if $x \in \R^n$
is a common boundary point of $\upmean{p}{K}{L}$ and $\upmean{q}{K}{L}$,
then any halfspace that supports $\upmean{p}{K}{L}$ and $\upmean{q}{K}{L}$ at $x$ also supports $\upmean{\infty}{K}{L}$ at $x$.
Using Proposition~\ref{prop:opt_cont} again implies (ii).

For (iii), Lemma~\ref{lem:common_boundary_different_p}~(iii) shows that
any common boundary point $x \in \R^n$ of $\lowmean{p}{K}{L}$ and $\upmean{q}{K}{L}$ also lies in $\lowmean{-\infty}{K}{L}$,
and that any hyperplane that supports $\upmean{q}{K}{L}$ at $x$ also supports $\upmean{\infty}{K}{L}$ at $x$.
A third time, Proposition~\ref{prop:opt_cont} yields the desired result.

Finally, if $\lowmean{p}{K}{L}$ and $\upmean{p}{K}{L}$ are both 0-symmetric,
then a single common boundary point is sufficient to prove the optimality of the containment in (iv) by Proposition~\ref{prop:opt_cont}.
Such a point is guaranteed to exist by Corollary~\ref{cor:common_boundary}~(iv) if $p \neq \pm \infty$.
The case of $p = \pm \infty$ is omitted here since it is covered by Theorem~\ref{thm:same_p},
which we prove in the next section.
\end{proof}

\section{Equality Cases Between \texorpdfstring{$p$}{p}-Means}
\label{sec:equality_cases}

In this section, we answer the questions about equality cases between $p$-means that naturally arise from the basic containment relations shown in Lemma~\ref{lem:p-mean_basics}.
First, we verify Theorem~\ref{thm:same_p}, providing the equality case for Lemma~\ref{lem:p-mean_basics}~(ii).
We then turn to Lemma~\ref{lem:p-mean_basics}~(iii), where we consider increasing $p$ or increasing the sets $K$ and $L$ separately.
While we obtain the same strict monotonicity in $p$ as for real numbers,
we shall see below that the strict monotonicity in the set arguments is valid only for certain values of $p$.
Additionally, we characterize the equality cases when different types of $p$-means and different $p$-values are mixed at the same time.
While one combination is immediately covered by the above results, the other requires a more involved approach provided in Section~\ref{sec:radii}.

We begin with establishing Theorem~\ref{thm:same_p}.
Most of the technical details required are already covered by Lemma~\ref{lem:common_boundary_same_p}.
The final ingredient is an additional result
that essentially characterizes when two bodies $K,L \in \CK^n_0$
are dilates of each other.
Despite appearing quite natural,
it is surprisingly hard to find related literature.
The only result we could locate in this direction is \cite[Theorem~1]{Ha},
where the special case for $L = -K$ is essentially established as part of the proof.
While the approach there could be extended to general $L$
and can also be found in traces of the proof presented below,
we demonstrate slightly more general techniques
that allow easier extensions of the result (see Remark~\ref{rem:dilation_char} below).
Let us also point out that the result below is part of
the master's thesis \cite{Grun}.

\begin{theorem}
\label{thm:dilates}
Let $K,L \in \CK^n_0$.
Then the following are equivalent:
\begin{enumerate}[(i)]
\item For all $x \in \R^n \setminus \{0\}$, there exist parallel hyperplanes supporting $K$ at $\normal{x}{K}$ and $L$ at $\normal{x}{L}$.
\item $n=1$, or $K$ and $L$ are dilates of each other.
\end{enumerate}

\end{theorem}
\begin{proof}
Since (ii) clearly implies (i),
we only need to show that $K$ and $L$ are dilates of each other if (i) is satisfied for $n \geq 2$.
To this end, it suffices to show for all $x,y \in \R^n \setminus \{0\}$ that
\[
    \frac{\gauge{x}_L}{\gauge{x}_K}
    = \frac{\gauge{y}_L}{\gauge{y}_K}.
\]
Since $n \geq 2$, there exists a Lipschitz continuous curve $\gamma \colon [0,1] \to \R^n \setminus \{0\}$ with
$\gamma(0) = x$ and $\gamma(1) = y$.
We define the curve $\gamma_K \colon [0,1] \to \bd(K)$, $\gamma_K(t) = \normal{\gamma(t)}{K}$, and analogously $\gamma_L$ for $L$.
The function $\gauge{\cdot}_K$ is convex and therefore Lipschitz on the compact set $\gamma([0,1])$.
Thus, $\gamma_K$ is the quotient of two Lipschitz maps,
where $\gauge{\gamma(t)}_K$ is
additionally bounded from above and below by some positive constants.
Consequently, $\gamma_K$ is also Lipschitz.
In the same way, it follows that $\gamma_L$ and
\[
    \lambda \colon [0,1] \to \R,
    \lambda(t) = \frac{\gauge{\gamma(t)}_L}{\gauge{\gamma(t)}_K}
\]
are Lipschitz.

Next, suppose that $\gamma_K$ is differentiable at some $t \in (0,1)$.
Let $a \in \R^n$ such that $a^T \gamma_K(t) = h_K(a)$.
Then
\[
    a^T \gamma_K'(t)
    = \lim_{0 \neq h \to 0} \frac{a^T \gamma_K(t+h) - a^T \gamma_K(t)}{h},
\]
where $a^T \gamma_K(t+h) \leq h_K(a) = a^T \gamma_K(t)$
shows that the difference quotient in the limit is non-positive for $h > 0$ and non-negative for $h < 0$.
It follows $a^T \gamma_K'(t) = 0$.

Now, choose any $t \in (0,1)$.
By the assumption (i) and $0 \in \inte(K) \cap \inte(L)$, there exists some $a \in \R^n$ with $a^T \gamma_K(t) = h_K(a) > 0$ and $a^T \gamma_L(t) = h_L(a) > 0$.
There further exists some open set $U \subset [0,1]$ with $t \in U$ such that $a^T \gamma(s) > 0$ for all $s \in U$ and thus
\[
    \lambda(s)
    = \frac{a^T \gamma(s) \gauge{\gamma(s)}_L}{a^T \gamma(s) \gauge{\gamma(s)}_K}
    = \frac{a^T \gamma_K(s)}{a^T \gamma_L(s)}.
\]
Consequently, if $\gamma_K$ and $\gamma_L$ are both differentiable at $t$,
then so is $\lambda$.
In this case, $a^T \gamma_K'(t) = 0 = a^T \gamma_L'(t)$ by the above shows $\lambda'(t) = 0$.

Finally, the Fundamental Theorem of Calculus for Lebesgue Integration shows that $\gamma_K$ and $\gamma_L$ are, as Lipschitz maps, differentiable almost everywhere.
Hence, $\lambda$ has derivative $0$ almost everywhere.
Since $\lambda$ is also Lipschitz, applying the Fundamental Theorem of Calculus for Lebesgue Integration again shows that $\lambda$ is constant.
In particular,
\[
    \frac{\gauge{x}_L}{\gauge{x}_K}
    = \lambda(0)
    = \lambda(1)
    = \frac{\gauge{y}_L}{\gauge{y}_K}
\]
is true as required.
\end{proof}

\begin{remark}
\label{rem:dilation_char}
An advantage of the above proof over the idea presented in \cite{Ha} is that it can be generalized more easily.
For example, we could assume the property in (i) only for those points in some positively homogeneous set $C \subset \R^n \setminus \{0\}$ (i.e., a set with $(0,\infty) C = C$) such that any two points in $C$ are connected by a Lipschitz continuous (or even just absolutely continuous) curve that stays within $C$.
This applies in particular if $C \cup \{0\}$ is a convex cone that is not a line.
With an essentially identical proof, we would then conclude that $K \cap C$ and $L \cap C$ must be dilates of each other.

It would be interesting to know if the above assumptions can be relaxed even further to $C \subset \R^n \setminus \{0\}$ being positively homogeneous and connected.
This would clearly be a minimal set of assumptions on $C$ to still be able to conclude that $K \cap C$ and $L \cap C$ are dilates of each other.
Let us point out for $n = 2$ that $C$ being positive homogeneous and connected is already enough to obtain the stronger connectivity assumption from above,
but this implication may fail in higher dimensions.
\end{remark}

We are now ready to analyze the equality cases of $p$-means,
beginning with upper and lower $p$-means for the same $p$-value.

\begin{proof}[Proof of Theorem~\ref{thm:same_p}]
We first consider the cases $p = \pm \infty$.
By Lemma~\ref{lem:p-mean_basics}~(i)~and~(ii),
we only need to verify the inclusion $\upmean{-\infty}{K}{L} \subset \lowmean{-\infty}{K}{L} = K \cap L$.
To this end, let $x \in \upmean{-\infty}{K}{L}$ and $a \in \mS_2$.
Then $a^T x \leq m_{-\infty}(h_K(a),h_L(a)) = \min \{ h_K(a), h_L(a) \}$.
Since $a$ has been chosen arbitrarily, we obtain $x \in K \cap L$.

From now on, we assume $p \neq \pm \infty$.
By Corollary~\ref{cor:common_boundary}~(iv),
(i) is equivalent to the fact that for all $x \in \mS_2$, $K$ and $L$ can be supported
by parallel hyperplanes at $\normal{x}{K}$ and $\normal{x}{L}$, respectively.
This, in turn, is equivalent to $n = 1$ or $K$ and $L$ being dilates of each other according to Theorem~\ref{thm:dilates}.
\end{proof}

We turn to the equality cases for different values of $p$,
extending previously known results \cite{Fipolarmeans,Fipmeans}.
The following theorem is a direct corollary of Theorem~\ref{thm:stability}, which is proven in the next section.
We state it here explicitly for the sake of completeness.
Let us point out that unless $(M_p,M_q) = (\overline{M}_p, \underline{M}_q)$,
this result could also be derived from Corollary~\ref{cor:common_boundary}.
However, for this particular pair of $p$-means where we have no natural containment relation in general,
we need to resort to our methods based on the covering radius presented in the next section.

\begin{theorem}
\label{thm:equality_different_p}
Let $K,L \in \CK^n_0$, $p,q \in [-\infty,\infty]$ with $p < q$, and $M_p \in \{ \overline{M}_p, \underline{M}_ p\}$,
$M_q \in \{ \overline{M}_q, \underline{M}_q \}$.
Then the following are equivalent:
\begin{enumerate}[(i)]
\item $M_p(K,L) = M_q(K,L)$.
\item $K = L$.
\end{enumerate}
\end{theorem}

Lastly, we consider the monotonicity of $p$-means in their set-arguments.
For $p$-means of reals numbers, it is well-known that they are strictly increasing in both arguments when $p \in \R$.
While this immediately leads to strict monotonicity of $p$-means of convex bodies also in higher dimensions for those values of $p$ where Firey's original definitions apply,
the strict monotonicty does not extend to all values of $p$ as observed in Example~\ref{ex:polytope_redundant}.

\begin{theorem}
Let $K, K', L, L' \in \CK^n_0$ with $K \subset K'$, $L \subset L'$, and either $n = 1$ and $p \in \R$, or $n \geq 2$ and $p \in [1,\infty)$. Then the following are equivalent:
\begin{enumerate}[(i)]
\item $\lowmean{-p}{K}{L} = \lowmean{-p}{K'}{L'}$.
\item $\upmean{p}{K}{L} = \upmean{p}{K'}{L'}$.
\item $K = K'$ and $L = L'$.
\end{enumerate}
For all other combinations of $n \in \N$ and $p \in [-\infty,\infty]$,
only the implications from (iii) to (i) and (ii) hold in general.
\end{theorem} \begin{proof}
(iii) is clearly sufficient for (i) and (ii).
The implications from (i) and (ii) to (iii) when $n=1$ and $p\in \R$ are well-known (see Remark~\ref{rem:mp}).
For $p \in [1,\infty)$,
we use that $h_{\upmean{p}{K}{L}} = m_p(h_K,h_L)$ by
the discussion following Definition~\ref{def:p-means}.
An analogous identity applies for $K'$ and $L'$.
Hence, $m_p(h_K,h_L) = m_p(h_{K'},h_{L'})$, which by $p \in [1,\infty)$ implies $h_K = h_{K'}$ and $h_L = h_{L'}$.
This gives the implication from (ii) to (iii).
For the implication from (i) to (iii), we use an analogous argument involving gauge functions.

Next, we provide examples where (i) or (ii) are satisfied but the other two statements in the lemma are violated when $p = \pm \infty$, or $n \geq 2$ and $p \in (-\infty,1)$.
Theorem~\ref{thm:same_p} and Lemma~\ref{lem:p-mean_basics}~(i) show for any $L = L' = K \subsetneq K'$ that $L = K \cap L = \lowmean{-\infty}{K}{L} = (\lowmean{\infty}{K^\circ}{L^\circ})^\circ$,
where these means do not change if we replace $K$ by $K'$ and $L$ by $L'$.
However, all of $K = \conv(K \cup L) = \upmean{\infty}{K}{L} = (\upmean{-\infty}{K^\circ}{L^\circ})^\circ$ clearly do change under this replacement.
This gives an example where (i) is satisfied but (ii) and (iii) are violated for $p = \pm \infty$.

For $n \geq 2$ and $p \in (-\infty,1)$, Example~\ref{ex:polytope_redundant} already provides an instance of $K,K',L,L'$ where $\lowmean{-p}{K}{L} = \lowmean{-p}{K'}{L'}$ but $\upmean{p}{K}{L} \subsetneq \upmean{p}{K'}{L'}$.
Therefore, (i) is again satisfied, but (ii) and (iii) are violated.

Lastly, we note that whenever we have an example where (i) is satisfied but (ii) and (iii) are violated for some $p$, then going to the polar bodies with Lemma~\ref{lem:p-mean_basics}~(i) gives an example where (ii) is satisfied but (i) and (iii) are violated for the same value of $p$.
This covers all remaining cases.
\end{proof}

\section{Covering Radii}
\label{sec:radii}

In this section, we verify the bounds on the covering radii between $p$-means claimed in Theorems~\ref{thm:lower_bound}~and~\ref{thm:upper_bounds}.
Based on the former, we then also show the stability estimates in Theorem~\ref{thm:stability}.
Note that several of the inequalities proven here generalize results obtained in \cite{BrDiMe},
which we discuss in detail in Remark~\ref{rem:BDM_comparison} toward the end of this section.

\begin{remark}
\label{rem:R0_basics}
Before we begin with the proofs, let us discuss the use of the covering radius and covering distance for the results throughout this section.
We want to compare $p$-means based on a metric that is invariant under invertible linear transformations and does not introduce translations,
so that we can properly concentrate on the shapes, sizes, and positioning of the bodies relative to each other rather than depending on these properties of the individual bodies.
These conditions are evidently satisfied by the covering distance.
Moreover, it is straightforward to see for closed convex sets $K,L \subset \R^n$ containing $0$ that $R_0(K^\circ,L^\circ) = R_0(L,K)$,
so in particular $R_0^{\max}(K^\circ,L^\circ) = R_0^{\max}(K,L)$.
An easy volume argument further shows for $K,L \in \CK^n_0$ that $R_0^{\max}(K,L) \geq 1$, with equality if and only if $K = L$.
Thus, $R_0^{\max}$ is quite obviously a (multiplicative) metric on $\CK^n_0$ that satisfies several desired properties and allows us to capitalize on the dual nature of upper and lower $p$-means.

Let us also point out that the covering distance unifies some desirable properties of the well-known Hausdorff and Banach--Mazur distances.
On the one hand, it induces the same topology as the Hausdorff distance and can properly distinguish whether convex bodies coincide (without depending on their common scaling).
On the other hand, it enjoys some invariance properties of the Banach--Mazur distance (without collapsing affine or linear classes into single objects) and is closely related to this distance in terms of their definitions via containment problems,
such that they can be easily related to each other in appropriate cases (see Section~\ref{sec:distance}).

\pagebreak
Lastly, recall that we want to introduce dilation factors to sharpen or reverse the natural containment relations between $p$-means.
To this aim, the covering distance follows the spirit of \cite{BrDiMe}.
The connection to \cite{BrDiMe} becomes apparent from the following comparison of the covering radius and the circumradius,
which is also a preparation for the results in Section~\ref{sec:distance}.
For $K,L \in \CK^n_0$, the covering radius $R_0(K,L)$ coincides with the circumradius $R(K,L)$ if and only if $0$ is a circumcenter.
Unlike the former, the circumradius is invariant under translations of $K$ and $L$.
Furthermore, \textup{\cite[Lemma~2.1]{XiLe}} shows that there always exist translates $K'$ and $L'$ of fulldimensional bodies $K$ and $L$
such that $R_0(K',L') = R(K,L)$, $R_0(L',K') = R(L,K)$, and either $0 \in \inte(K' \cap L')$ or $0 \in \bd(K') \cap \bd(L')$.
It is not hard to see that this yields
\begin{equation}
\label{eq:maxR}
	\max \{ R(K,L), R(L,K) \}
    = \min \{ R_0^{\max}(K - x, L - y) : x \in K, y \in L \},
\end{equation}
where the minimum might not be attained for any pair of $x \in \inte(K)$ and $y \in \inte(L)$.
If $K$ and $L$ are $0$-symmetric, or $K$ is Minkowski centered and $L = -K$, then all covering radii considered above are actually circumradii. Consequently, equality holds in \eqref{eq:maxR} even without translations.
In particular, in the latter case we have $R_0^{\max}(K,-K) = s(K)$.
\end{remark}

We begin the proofs with some technical results.
The assertion of the first one
is clear from Proposition~\ref{prop:support_gauge_fct}~(v)
if $f = h_{K_f} \vert_{\mS_2}$ and $g = h_{K_g} \vert_{\mS_2}$ are positive and real valued functions.
However, in the context of upper $p$-means or Remark~\ref{rem:R0_boundary} below,
we sometimes only have an upper bound on the support functions or need to allow the values $0$ and $\infty$.
With the following lemma, we are able to replace the support functions by such upper bounds,
assuming the corresponding convex sets as defined directly below do not differ from the original sets (as is the case for upper $p$-means).

\begin{lemma}
\label{lem:R0_support_fct}
Let $f,g \colon \mS_2 \to [0,\infty]$ be any maps.
Define $K_f \coloneqq \bigcap_{a \in \mS_2} H_{(a,f(a))}^\leq$ and $K_g$ analogously for $g$.
Then
\[
	R_0(K_f,K_g)
    \leq \sup \left\{ \frac{f(a)}{g(a)} : a \in \mS_2 \right\},
\]
with equality if $f = h_{K_f} \vert_{\mS_2}$,
where we write
\begin{equation}
\label{eq:frac_def}
	\frac{\mpvara}{\mpvarb}
		\coloneqq \inf \{ \lambda > 0 : \mpvara \leq \lambda \mpvarb \}
		= \begin{cases}
			0,				& \text{if $\mpvara = 0$ or $\mpvarb = \infty$}, \\
			\infty,				& \text{if $\mpvara \neq 0$ and $\mpvarb = 0$, or $\mpvara = \infty$ and $\mpvarb \neq \infty$}, \\
			\frac{\mpvara}{\mpvarb},	& \text{else},
		\end{cases}
\end{equation}
for any $\mpvara, \mpvarb \in [0,\infty]$.
\end{lemma}

\begin{proof}
Let $\sigma$ be the supremum in the statement of the lemma.
If $\sigma = \infty$, the inequality is clear.
Otherwise, we have for any $a \in \mS_2$ that $\frac{f(a)}{g(a)} \leq \sigma < \infty$
and thus with \eqref{eq:frac_def} for any $\varepsilon > 0$ also $f(a) \leq (\sigma + \varepsilon) g(a)$.
Hence,
\[
	K_f \subset H_{(a,f(a))}^\leq
		\subset H_{(a,(\sigma + \varepsilon) g(a))}^\leq
		= (\sigma + \varepsilon) H_{(a,g(a))}^\leq,
\]
which proves $K_f \subset (\sigma + \varepsilon) K_g$ and therefore $R_0(K_f,K_g) \leq \sigma + \varepsilon$ by intersecting over all $a \in \mS_2$.
Letting $\varepsilon$ tend to $0$ yields the claimed inequality.

Now, assume $f = h_{K_f} \vert_{\mS_2}$.
Let $(a^i)_{i \in \mathbb{N}}$ be a sequence with $a^i \in \mS_2$ for all $i$ and $\frac{f(a^i)}{g(a^i)} \to \sigma$ $(i \to \infty)$.
By $0 \leq f = h_{K_f} \vert_{\mS_2}$, there exists for each $i \in \N$
a sequence $(x^{i,m})_{m \in \N}$ with $x^{i,m} \in K_f$ for all $m$ and $0 \leq (a^i)^T x^{i,m} \to f(a^i)$ ($m \to \infty$).
With \eqref{eq:frac_def}, we obtain
\[
	\gauge{x^{i,m}}_{K_g}
		\geq \gauge{x^{i,m}}_{H_{(a^i,g(a^i))}^{\leq}}
		= \frac{(a^i)^T x^{i,m}}{g(a^i)},
\]
where the last term converges to $\frac{f(a^i)}{g(a^i)}$ for $m \to \infty$.
Since $x^{i,m} \in K_f$ for all $i,m$,
\[
	R_0(K_f,K_g)
	\geq \sup \left\{ \gauge{x^{i,m}}_{K_g} : i,m \in \N \right\}
	\geq \sup \left\{ \frac{f(a^i)}{g(a^i)} : i \in \N \right\}
	= \sigma
\]
shows the required reverse inequality.
\end{proof}

Our second set of technical results concerns for $p,q \in [-\infty,\infty]$ the function
\[
    \Phi_{p,q} : (0,\infty) \to (0,\infty),
    \Phi_{p,q}(x) = \frac{m_q(x,1)}{m_p(x,1)}.
\]
Since it comes up in Theorems~\ref{thm:stability}~--~\ref{thm:upper_bounds},
it is natural that we need some analytical properties of this function.
The first few observations are collected in the following remark.

\begin{remark} \label{rem:phi_pq}
Let $p,q \in [-\infty,\infty]$.
For $\mpvara,\mpvarb > 0$, we obtain as immediate consequences of the properties in Remark~\ref{rem:mp} that
\begin{equation}
\label{eq:phi_inv_arg}
	\Phi_{p,q} \left( \frac{\mpvara}{\mpvarb} \right)
		= \frac{m_q(\mpvara/\mpvarb,1)}{m_p(\mpvara/\mpvarb,1)}
		= \frac{m_q(\mpvara,\mpvarb)}{m_p(\mpvara,\mpvarb)}
		= \frac{m_q(1,\mpvarb/\mpvara)}{m_p(1,\mpvarb/\mpvara)}
		= \Phi_{p,q} \left( \frac{\mpvarb}{\mpvara} \right)
\end{equation}
as well as
\[
	\Phi_{-q,-p}(\mpvara)
		= \frac{m_{-p}(\mpvara,1)}{m_{-q}(\mpvara,1)}
		= \frac{m_{-p}(1/\mpvara,1)}{m_{-q}(1/\mpvara,1)}
		= \frac{m_q(\mpvara,1)}{m_p(\mpvara,1)}
		= \Phi_{p,q}(\mpvara).
\]
Moreover, $\Phi_{p,q}$ is clearly differentiable on $(1,\infty)$ as the quotient of two differentiable functions.
If $p$ and $q$ do not lie in $\{-\infty, 0, \infty \}$, its derivative is
\begin{align*}
	\Phi_{p,q}'(\mpvara)
	& = \frac{
        \left( \frac{\mpvara^q + 1}{2} \right)^{\frac{1}{q}-1} \frac{\mpvara^{q-1}}{2} \, m_p(\mpvara,1)
		- \left( \frac{\mpvara^p + 1}{2} \right)^{\frac{1}{p}-1} \frac{\mpvara^{p-1}}{2} \, m_q(\mpvara,1)
    }{m_p(\mpvara,1)^2} \\
	& = \frac{
        m_q(\mpvara,1) \, m_p(\mpvara,1) \left( \frac{\mpvara^{q-1}}{\mpvara^q + 1} - \frac{\mpvara^{p-1}}{\mpvara^p + 1} \right)
    }{m_p(\mpvara,1)^2}
	= \frac{\Phi_{p,q}(\mpvara)}{\mpvara} \left( \frac{1}{1+\mpvara^{-q}} - \frac{1}{1+\mpvara^{-p}} \right).
\end{align*}
Defining $\frac{1}{1+\mpvara^{-\infty}} \coloneqq 1$ and $\frac{1}{1+\mpvara^{\infty}} \coloneqq 0$ for all $\mpvara > 1$,
it can easily be verified that the final formula above for $\Phi_{p,q}'(\mpvara)$ is correct for all $p,q \in [-\infty,\infty]$.
Hence, $\Phi_{p,q}'(\mpvara) \geq 0$ for $\mpvara > 1$ if and only if $p \leq q$.
Together with \eqref{eq:phi_inv_arg}, $\Phi_{p,q}$ is increasing on $[1,\infty)$ and decreasing on $(0,1]$ for $p \leq q$.
\end{remark}

Our last technical result provides some approximations to $\Phi_{p,q}$.
We utilize them for our proof of Theorem~\ref{thm:stability} at the end of this section.
They are needed mainly because we use the inverse function of $\Phi_{p,q}$ restricted to $[1,\infty)$ when $p<q$ (simply denoted $\Phi_{p,q}^{-1} : \Phi_{p,q}([1,\infty)) \to [1,\infty)$ instead of $(\Phi_{p,q} |_{[1,\infty)})^{-1}$ in the following),
where there is no closed form for this inverse function in general.

\begin{lemma} \label{lem:stability}
Let $p,q \in \R$ with $p < q$ and $\mpvara \in [1,\infty)$.
Then
\[
    \Phi_{p,q}(\mpvara)
    \leq 1 + \frac{q-p}{8} (\mpvara-1)^2.
\]
Moreover, if $\delta \in [0,1)$ and $\mpvara \leq 1 + \frac{\delta}{\max\{ |p|, |q| \} + 3/2}$, then
\[
    \Phi_{p,q}(\mpvara)
    \geq 1 + \frac{q-p}{8} (1-\delta) (\mpvara-1)^2.
\]
\end{lemma}

Let us point out that the $3/2$ in the denominator in the assumption for the second part is the smallest possible constant usable in our setup when $p$ and $q$ both go to $0$.

\begin{proof}
We begin with the upper bound,
for which we claim in case of $\intvar>1$ that
\begin{equation} \label{eq:stability_lemma_claim1}
    \Phi_{p,q}'(\intvar)
    \leq \frac{q-p}{4} (\intvar-1).
\end{equation}
Integrating both sides
over $\intvar \in [1,\mpvara]$ then
verifies the upper bound in the lemma since $\Phi_{p,q}(1) = 1$.
By Remark~\ref{rem:phi_pq},
we have
\[
    \Phi_{p,q}'(\intvar)
    = \frac{\Phi_{p,q}(\intvar)}{\intvar} \left( \frac{1}{1+\intvar^{-q}} - \frac{1}{1+\intvar^{-p}} \right),
\]
and clearly
\[
    \Phi_{p,q}(\intvar)
    \leq \frac{m_\infty(\intvar,1)}{m_{-\infty}(\intvar,1)}
    = \intvar.
\]
Therefore, the claim \eqref{eq:stability_lemma_claim1} follows once we verify that
\begin{equation} \label{eq:stability_lemma_mon1}
    \frac{1}{1+\intvar^{-q}} - \frac{1}{1+\intvar^{-p}}
    \leq \frac{q-p}{4} (\intvar-1).
\end{equation}

To this end, we define the function
\[
    h_\intvar \colon \R \to \R,
    h_\intvar(r) = \frac{1}{1+\intvar^{-r}}.
\]
By the mean value theorem, there exists some $r \in (p,q)$ such that
\[
    \frac{1}{1+\intvar^{-q}} - \frac{1}{1+\intvar^{-p}}
    = h_\intvar(q) - h_\intvar(p)
    = (q-p) h_\intvar'(r)
    = (q-p) \frac{\ln(\intvar) \intvar^{-r}}{(1+\intvar^{-r})^2}.
\]
Using $\ln(\intvar) \leq \intvar-1$ and $4 \intvar^{-r} \leq (1+\intvar^{-r})^2$ yields \eqref{eq:stability_lemma_mon1} and therefore \eqref{eq:stability_lemma_claim1}.

We turn to the lower bound,
for which we may suppose $\delta \in (0,1)$ and use an analogous but slightly more involved approach than the above.
This time, we claim for $\intvar \in I \coloneqq (1,1 + \frac{\delta}{\max\{ |p|, |q| \} + 3/2})$
that
\begin{equation} \label{eq:stability_lemma_claim2}
    \Phi_{p,q}'(\intvar)
    \geq \frac{q-p}{4} (1-\delta) (\intvar-1).
\end{equation}
Again integrating both sides
over $\intvar \in [1,\mpvara]$ then yields the lower bound in the lemma since $\Phi_{p,q}(1) = 1$.
By Remark~\ref{rem:phi_pq} and $\Phi_{p,q}(\intvar) \geq 1$,
it suffices to show for $\intvar \in I$ that
\begin{equation} \label{eq:stability_lemma_mon2}
    \frac{1}{\intvar} \left( \frac{1}{1+\intvar^{-q}} - \frac{1}{1+\intvar^{-p}} \right)
    \geq \frac{q-p}{4} (1-\delta) (\intvar-1).
\end{equation}

As before, there exists some $r \in (p,q)$ satisfying
\[
    \frac{1}{1+\intvar^{-q}} - \frac{1}{1+\intvar^{-p}}
    = (q-p) \frac{\ln(\intvar) \intvar^{-r}}{(1+\intvar^{-r})^2},
\]
with which we can rewrite \eqref{eq:stability_lemma_mon2} as
\[
    \frac{\ln(\intvar)}{\intvar (\intvar-1)}
    \geq (1-\delta) \frac{(1+\intvar^{-r})^2}{4 \intvar^{-r}}.
\]
Now, note that
\[
    \frac{(1+\intvar^{-r})^2}{4 \intvar^{-r}}
    = m_{\frac{1}{2}}(\intvar^r,\intvar^{-r})
    \leq \intvar^{|r|}.
\]
Moreover, the assumption $\intvar \in I$ implies
\[
    1-\delta
    \leq 1 - (\intvar-1) \left( |r| + \frac{3}{2} \right),
\]
so we can establish \eqref{eq:stability_lemma_mon2} by verifying for all $\intvar \in (1,\infty)$ and $r \in [0,\infty)$ that
\begin{equation} \label{eq:stability_lemma_mon3}
    \frac{\ln(\intvar)}{\intvar (\intvar-1)}
    \geq \intvar^{r} \left( 1-(\intvar-1) \left( r+\frac{3}{2} \right) \right).
\end{equation}
The right-hand term is differentiable in $r > 0$ with derivative
\[
    \intvar^r \left( \ln(\intvar) + 1 - \intvar - (\intvar-1) \left( r + \frac{3}{2} \right) \ln(\intvar) \right)
    \leq 0
\]
by $\ln(\intvar) + 1 \leq \intvar$.
Hence, it suffices to show \eqref{eq:stability_lemma_mon3} for $r=0$,
in which case it can be rearranged to
\[
    \ln(\intvar) - \intvar (\intvar-1) \left( 1 - \frac{3}{2} (\intvar-1) \right) \geq 0.
\]
The left-hand term attains the value $0$ at $\intvar=1$ and is differentiable in $\intvar \in (1,\infty)$ with derivative
\[
    \frac{(\intvar-1)^2 (9\intvar+2)}{2\intvar}
    \geq 0.
\]
Altogether, we finally obtain \eqref{eq:stability_lemma_mon3}, thus
\eqref{eq:stability_lemma_mon2},
and consequently \eqref{eq:stability_lemma_claim2}.
\end{proof}

We are now ready to prove how well $p$-means approximate each other in terms of covering radii.
Let us begin with Theorem~\ref{thm:lower_bound} and parts (i)~\m&~(ii) of Theorem~\ref{thm:upper_bounds},
whose proofs go hand in hand.
The upper bounds on $R_0(\upmean{p}{K}{L},\lowmean{q}{K}{L})$ are dealt with separately,
as this combination of means behaves differently from the other three and requires more work (compare with Section~\ref{sec:boundary}, where it was also omitted).

Throughout the proofs in this section,
we write $R_0^{\max} \coloneqq R_0^{\max}(K,L)$ whenever it is clear that we mean the covering distance of $K$ and $L$.
Moreover, for the upcoming proof specifically,
we shorten $R_0(\lowmean{p}{K}{L},\upmean{q}{K}{L})$ to $R_0(\underline{p},\overline{q})$
and use analogous abbreviations for the other combinations of upper and lower $p$-means.
The bars are omitted for $\pm \infty$ since the respective means in that case are equal by Theorem~\ref{thm:same_p}.

\begin{proof}[Proof of Theorem~\ref{thm:lower_bound} and Theorem~\ref{thm:upper_bounds}~(i)~\m&~(ii)]
First, observe that Lemma~\ref{lem:p-mean_basics}~(ii) shows
\begin{equation}
\label{eq:R0_p-means_relation}
	R_0(\underline{p},\overline{q})
		\leq \left\{ \begin{array}{c} R_0(\overline{p},\overline{q}) \\ R_0(\underline{p},\underline{q}) \end{array} \right\}
		\leq R_0(\overline{p},\underline{q}).
\end{equation}
With Lemma~\ref{lem:p-mean_basics}~(iii), this already verifies Theorem~\ref{thm:upper_bounds}~(ii).
Moreover, $R_0^{\max} = R_0^{\max}(K,L) \geq 1$ yields
\[
	\left. \begin{array}{c} K \\ L \end{array} \right\}
		\subset \conv(K \cup L)
		\subset R_0^{\max} \cdot (K \cap L)
		\subset \left\{ \begin{array}{c} R_0^{\max} K \\ R_0^{\max} L \end{array} \right.
\]
and therefore
\begin{equation}
\label{eq:R0_infty_means}
	R_0^{\max}
		= R_0(\infty,-\infty)
		\leq R_0(\infty,\underline{p}) R_0(\underline{p},\overline{q}) R_0(\overline{q},-\infty).
\end{equation}

Next, with $R_0(B^\circ,A^\circ) = R_0(A,B)$ for closed convex $A,B \subset \R^n$ containing $0$,
Lemma~\ref{lem:R0_support_fct} yields
\begin{equation}
\label{eq:R0max_support_fct}
	R_0^{\max}
	= \sup \left\{ \frac{h_K(a)}{h_L(a)}, \frac{h_L(a)}{h_K(a)} \colon a \in \mS_2 \right\}
	= \sup \left\{ \frac{h_{K^\circ}(a)}{h_{L^\circ}(a)}, \frac{h_{L^\circ}(a)}{h_{K^\circ}(a)} \colon a \in \mS_2 \right\}
\end{equation}
and
\begin{equation}
\label{eq:up-mean_R0_ineq}
	R_0(\overline{p},\overline{q})
	\leq \sup \left\{ \frac{m_p(h_K(a),h_L(a))}{m_q(h_K(a),h_L(a))} \colon a \in \mS_2 \right\}.
\end{equation}
Using Lemma~\ref{lem:p-mean_basics}~(i) and Remark~\ref{rem:phi_pq}, this also shows that
\begin{align}
\label{eq:low-mean_R0_ineq}
\begin{split}
    R_0(\underline{p},\underline{q})
    & = R_0((\lowmean{q}{K}{L})^\circ, (\lowmean{p}{K}{L})^\circ)
    = R_0(\upmean{-q}{K^\circ}{L^\circ},\upmean{-p}{K^\circ}{L^\circ}) \\
	& \leq \sup \left\{ \frac{m_{-q}(h_{K^\circ}(a),h_{L^\circ}(a))}{m_{-p}(h_{K^\circ}(a),h_{L^\circ}(a))} : a \in \mS_2 \right\}
	= \sup \left\{ \frac{m_{p}(h_{K^\circ}(a),h_{L^\circ}(a))}{m_{q}(h_{K^\circ}(a),h_{L^\circ}(a))} : a \in \mS_2 \right\}.
\end{split}
\end{align}
Let us point out that
\eqref{eq:R0_p-means_relation}, \eqref{eq:up-mean_R0_ineq}, and \eqref{eq:low-mean_R0_ineq}
together provide an alternative proof for Theorem~\ref{thm:upper_bounds}~(ii).

For $p \geq q$, Remark~\ref{rem:phi_pq} and \eqref{eq:R0max_support_fct} show
\begin{equation}
\label{eq:R0_p-means_bound}
	\sup \left\{ \frac{m_p(h_K(a),h_L(a))}{m_q(h_K(a),h_L(a))} \colon a \in \mS_2 \right\}
	= \sup \left\{ \frac{m_{p}(h_{K^\circ}(a),h_{L^\circ}(a))}{m_{q}(h_{K^\circ}(a),h_{L^\circ}(a))} : a \in \mS_2 \right\}
	= \frac{m_p(R_0^{\max},1)}{m_q(R_0^{\max},1)}.
\end{equation}
Thus, \eqref{eq:R0_p-means_relation}, \eqref{eq:up-mean_R0_ineq}~--~\eqref{eq:R0_p-means_bound}
together yield
\[
	R_0(\underline{p},\overline{q})
		\leq \left\{ \begin{array}{c} R_0(\overline{p},\overline{q}) \\ R_0(\underline{p},\underline{q}) \end{array} \right\}
		\leq \frac{m_p(R_0^{\max},1)}{m_q(R_0^{\max},1)}.
\]
In particular, we obtain for general $p$ and $q$ from \eqref{eq:R0_infty_means} that
\[
	R_0^{\max}
		\leq R_0(\infty,\underline{p}) R_0(\underline{p},\overline{q}) R_0(\overline{q},-\infty)
		\leq \frac{m_\infty(R_0^{\max},1)}{m_p(R_0^{\max},1)} R_0(\underline{p},\overline{q}) \frac{m_q(R_0^{\max},1)}{m_{-\infty}(R_0^{\max},1)},
\]
which simplifies to
\[
	\frac{m_p(R_0^{\max},1)}{m_q(R_0^{\max},1)}
		\leq R_0(\underline{p},\overline{q}).
\]
This establishes Thereom~\ref{thm:upper_bounds}~(i) and, with \eqref{eq:R0_p-means_relation}, also the inequality in Theorem~\ref{thm:lower_bound}.

\pagebreak
We turn to the equality cases.
If $L = \lambda K$ for some $\lambda \geq 1$, then clearly $R_0^{\max} = \lambda$ and
for any $r \in [-\infty,\infty]$ also $\lowmean{r}{K}{L} = m_r(R_0^{\max},1) K = \upmean{r}{K}{L}$.
This implies tightness of the lower bound in Theorem~\ref{thm:lower_bound}.
If $\lambda < 1$, we reverse the roles of $K$ and $L$ and arrive at the same conclusion.
Equality in the lower bound for $n = 1$ is clear.

By $R_0(-\infty,\infty) \leq R_0(\underline{p},\overline{q}) \leq \min \{ R_0(\underline{p},\underline{q}), R_0(\overline{p},\overline{q}) \}$,
it suffices to construct for $n \geq 2$ and $R \geq 1$ some $K \in \CK^n_0$ with $R_0^{\max}(K,-K) = R$ such that
$R_0( K \cap (-K), \conv(K \cup (-K)) ) = 1$ to finish the proof.
To this end, let
\[
	K \coloneqq \B_{\infty} + \frac{R - 1}{R + 1} u^n
		= \bigcap_{i = 1}^{n-1} H_{(\pm u^i,1)}^\leq \cap H_{(u^n,\frac{2 R}{R + 1})}^\leq \cap H_{(-u^n,\frac{2}{R + 1})}^\leq.
\]
It is easy to see that indeed $R_0^{\max}(K,-K) = R$.
Moreover, $u^1 \in K \cap (-K) \cap \bd(\conv(K \cup (-K)))$ completes the proof.
\end{proof}

We turn to the proofs of the upper bounds on $R_0(\upmean{p}{K}{L},\lowmean{q}{K}{L})$ in Theorem~\ref{thm:upper_bounds}~(iii)~\m&~(iv).
They rely on different methods,
so we handle them separately, starting with (iii).
Let us point out that unlike for the other three combinations of types of means,
the upper bounds for this forth combination do not coincide with the lower bound in Theorem~\ref{thm:lower_bound}, unless 
$p = \infty$, or $q = -\infty$, or $R_0^{\max}(K,L) = 1$.
In the first two cases, we already know the exact value of $R_0(\upmean{p}{K}{L},\lowmean{q}{K}{L})$
due to Theorem~\ref{thm:same_p}~and~Theorem~\ref{thm:upper_bounds}~(i).
On the other hand, $R_0^{\max}(K,L) = 1$ is equivalent to $K = L$,
in which case all $p$-means trivially coincide.

\begin{proof}[Proof of Theorem~\ref{thm:upper_bounds}~(iii)]
We note that $\lowmean{q}{K}{L} \supset \lowmean{q}{1/R_0^{\max}L}{L} = m_q(1/R_0^{\max},1) L$,
and analogously $\lowmean{q}{K}{L} \allowbreak \supset m_q(1/R_0^{\max},1) K$.
Thus, by the properties of $p$-means of real numbers discussed in Remark~\ref{rem:mp},
\[
	\upmean{p}{K}{L} \subset \upmean{\infty}{K}{L} \subset m_{-q}(R_0^{\max},1) \lowmean{q}{K}{L}.
\]
Also $\upmean{p}{K}{L} \subset \upmean{p}{R_0^{\max} L}{L} = m_p(R_0^{\max},1) L$,
and analogously for $K$.
Hence,
\[
	\upmean{p}{K}{L}
		\subset m_p(R_0^{\max},1) \lowmean{-\infty}{K}{L}
		\subset m_p(R_0^{\max},1) \lowmean{q}{K}{L},
\]
which establishes the claimed upper bound.

We turn to the tightness of the bound.
If $p = \infty$ or $q = -\infty$,
then the upper bound coincides with the lower bound in Theorem~\ref{thm:lower_bound}.
If $p = -\infty$ or $q = \infty$, then the upper bound equals 1 and we can use
Theorem~\ref{thm:same_p} together with the tightness in Theorem~\ref{thm:upper_bounds}~(ii).
For finite $p$ and $q$, we first consider the case $(p,q) \in [1,\infty) \times [-1,\infty)$.
Let $n \geq 2$, $R \geq 1$, and define
$K \coloneqq \conv( \{ v, -R v \in \R^n : v_1, \ldots, v_{n-1} \in \{-1,1\}, v_n = 1 \} )$ (cf.~Figure~\ref{fig:R0_up-low-means}).

\begin{figure}[ht]
\def\scale{0.85}
\begin{tikzpicture}[scale=\scale, line join=round]
\def\p{2}
\def\q{1}
\def\x{((3^\p+1)/2)^(1/\p)}
\def\y{((3^\q+1)/2)^(1/\q)}

\draw[dotted] (-3,-3) -- (3,3);
\draw[dotted] (-3,3) -- (3,-3);

\draw[thick] (-3,-3) -- (3,-3) -- (1,1) -- (-1,1) -- cycle;

\draw[dashed,thick] (3,3) -- (-3,3);
\draw[dashed,thick] (-1,-1) -- (-3,3);
\draw[dashed,thick] (-1,-1) -- (1,-1);
\draw[dashed,thick] (1,-1) -- (3,3);

\fill (0,0) circle [radius=\ds];
\node[anchor=west] (0,0) {0};

\draw[orange,thick] ({\y},{\y}) -- ({\y},{-\y}) -- ({-\y},{-\y}) -- ({-\y},{\y}) -- cycle;

\draw[blue,thick,variable=\t,samples=100,domain={1/(3^(\p-1)+1)}:{3^(\p-1)/(3^(\p-1)+1)}]
		plot	(	{3 / (2^(1/\p) * (\t^(\p/(\p-1))+(1-\t)^(\p/(\p-1)))^((\p-1)/\p))},
				{(6 * \t - 3) / (2^(1/\p) * (\t^(\p/(\p-1))+(1-\t)^(\p/(\p-1)))^((\p-1)/\p))}	)
	--	({\x},{\x})
	--	({-\x},{\x})
	--	plot	(	{-3 / (2^(1/\p) * (\t^(\p/(\p-1))+(1-\t)^(\p/(\p-1)))^((\p-1)/\p))},
				{(3 - 6 * \t) / (2^(1/\p) * (\t^(\p/(\p-1))+(1-\t)^(\p/(\p-1)))^((\p-1)/\p))}	)
	--	({-\x},{-\x})
	--	({\x},{-\x})
	--	cycle;	
\end{tikzpicture}
\caption{
The bodies in the proof of Theorem~\ref{thm:upper_bounds}~(iii) for $n=2$, $p=2$, $q=1$, and $R=3$:
$K$ (black), $-K$ (dashed), $\upmean{p}{K}{-K}$ (blue), $\lowmean{q}{K}{-K}$~(orange).
}
\label{fig:R0_up-low-means}
\end{figure}

Clearly, $R_0^{\max}(K,-K) = R$.
Moreover, $R u^1$ lies in $\upmean{1}{K}{-K} \subset \upmean{p}{K}{-K}$.
If $F$ is a facet of $K$ or $-K$, then there exist $i \in [n]$ and $\sigma = \pm 1$ with
$\pos(F) = \pos( \{ v \in \{-1,1\}^n : v_i = \sigma \} )$.
Hence, by Theorem~\ref{thm:polytopes}, we obtain that
\begin{equation}
\label{eq:low-mean_construction}
	\lowmean{q}{K}{-K}
		= \conv \left( \left\{
        m_q(\|v\|_K^{-1}, \|v\|_{-K}^{-1}) v
        : v \in \{-1,1\}^n \right\} \right)
		= m_q(R,1) \B_\infty.
\end{equation}
Altogether, equality in the upper bound in this case follows from
\[
	R_0(\upmean{p}{K}{-K},\lowmean{q}{K}{-K})
		\geq \gauge{R u^1}_{\lowmean{q}{K}{-K}}
		= \frac{R}{m_q(R,1)} \gauge{u^1}_\infty
		= m_{-q}(R,1).
\]

If instead $(p,q) \in (-\infty,1] \times (-\infty,-1]$,
we obtain the claim from the first case applied to $K^\circ$.
Since $R_0(B^\circ,A^\circ) = R_0(A,B)$ for closed convex $A,B \subset \R^n$ containing $0$, we then have $R_0^{\max}(K,-K) = R_0^{\max}(K^\circ,-K^\circ)$, as well as from Lemma~\ref{lem:p-mean_basics}~(i)
\begin{align*}
    R_0(\upmean{p}{K}{-K},\lowmean{q}{K}{-K})
    & = R_0((\lowmean{q}{K}{-K})^\circ, (\upmean{p}{K}{-K})^\circ)
    \\
    & = R_0(\upmean{-q}{K^\circ}{-K^\circ},\lowmean{-p}{K^\circ}{-K^\circ}).
    \qedhere
\end{align*}
\end{proof}

The above proof shows that the upper bound in Theorem~\ref{thm:upper_bounds}~(iii) applies for any combination of $p$ and $q$.
However, its tightness is so far not considered
for $n \geq 2$, $R_0^{\max}(K,L) > 1$,
and either $(p,q) \in (-\infty,1) \times (-1,\infty)$ or $(p,q) \in (1,\infty) \times (-\infty,-1)$.
For the first case, we can, in fact, prove at least for $n = 2$ that the upper bound is not tight
(see Lemma~\ref{lem:R0-up-low-means_strict} below).
For the second case, we show in Remark~\ref{rem:R0-up-low-means_ineq_2} 
that the additional upper bound from Theorem~\ref{thm:upper_bounds}~(iv) is strictly smaller.
Unfortunately, we do not have a more explicit representation of the bound if $\frac{p}{p-1} < -q$.
Even though we can reduce the computation of the tight upper bound to what is essentially a one-dimensional maximization problem,
finding its critical points for general $p$ and $q$ becomes a seemingly untractable problem.
Note that the easily identifiable critical points show that the bound is
at least $\frac{R_0^{\max}(K,L) + 1}{2}$.

We now continue with the proof of Theorem~\ref{thm:upper_bounds}~(iv).
It is based on a different representation of upper $p$-means for $p > 1$ given in \cite{LuYaZh}.

\begin{proof}[Proof of Theorem~\ref{thm:upper_bounds}~(iv)]
For $p > 1$, it is shown in \cite{LuYaZh} that
\begin{equation}
\label{eq:up-mean_alt_rep}
	\upmean{p}{K}{L}
		= 2^{- \frac{1}{p}} \left\{ \mu^{\frac{p-1}{p}} x + (1 - \mu)^{\frac{p-1}{p}} y : x \in K, y \in L, \mu \in [0,1] \right\}.
\end{equation}
By $q \in (-\infty,-1)$,
the claimed inequality follows from
\begin{align*}
    & R_0(\upmean{p}{K}{L}, \lowmean{q}{K}{L})
    = \sup \left\{
        \gauge{z}_{\lowmean{q}{K}{L}}: z \in \upmean{p}{K}{L}
    \right\} \\
	& = 2^{- \frac{1}{p}} \sup \left\{
		m_{-q} \left(
            \gauge{\mu^{\frac{p-1}{p}} x + (1 - \mu)^{\frac{p-1}{p}} y}_K,
            \gauge{\mu^{\frac{p-1}{p}} x + (1 - \mu)^{\frac{p-1}{p}} y}_L
        \right) : x \in K, y \in L, \mu \in [0,1]
    \right\} \\
	& \leq 2^{- \frac{1}{p}} \sup \left\{
        m_{-q} \left(
            \mu^{\frac{p-1}{p}} + (1 - \mu)^{\frac{p-1}{p}} R_0^{\max},
			\mu^{\frac{p-1}{p}} R_0^{\max} + (1 - \mu)^{\frac{p-1}{p}}
        \right) : \mu \in [0,1]
    \right\} \\
	& = 2^{\frac{1}{q} - \frac{1}{p}} \sup \left\{
        \gauge{
            R_0^{\max} \cdot \begin{psmallmatrix} (1-\mu)^{\frac{p-1}{p}} \\ \mu^{\frac{p-1}{p}} \end{psmallmatrix}
			+ \begin{psmallmatrix} \mu^{\frac{p-1}{p}} \\ (1-\mu)^{\frac{p-1}{p}} \end{psmallmatrix}
        }_{-q} : \mu \in [0,1]
    \right\} \\
	& = 2^{\frac{1}{q} - \frac{1}{p}} \sup \left\{
        \gauge{
            \begin{pmatrix} R_0^{\max} & 1 \\ 1 & R_0^{\max} \end{pmatrix} w
        }_{-q} : w \in \R^2, \gauge{w}_{\frac{p}{p-1}} \leq 1
    \right\}.
\end{align*}

When $\frac{p}{p-1} \geq -q$, we have for all $w \in \R^2$ that
\[
	\gauge{w}_{-q}
		= 2^{-\frac{1}{q}} m_{-q}(\vert w_1 \vert,\vert w_2 \vert)
		\leq 2^{-\frac{1}{q}} m_{\frac{p}{p-1}}(\vert w_1 \vert,\vert w_2 \vert)
		= 2^{-\frac{1}{q} - \frac{p-1}{p}} \gauge{w}_{\frac{p}{p-1}},
\]
with equality if $\vert w_1 \vert = \vert w_2 \vert$.
Thus, we obtain for $w \in \R^2$ with $\gauge{w}_{\frac{p}{p-1}} \leq 1$ that
\[
	2^{\frac{1}{q} - \frac{1}{p}} \gauge{\begin{pmatrix} R_0^{\max} & 1 \\ 1 & R_0^{\max} \end{pmatrix} w}_{-q}
		\leq 2^{\frac{1}{q} - \frac{1}{p}} (R_0^{\max} \gauge{w}_{-q} + \gauge{w}_{-q})
		\leq \frac{R_0^{\max} + 1}{2},
\]
with equality if $w_1 = w_2$ and $\gauge{w}_{\frac{p}{p-1}} = 1$.
This verifies the explicit representation of the upper bound in this case.

For the tightness of the inequality,
let $K \coloneqq \conv( \{ v, -R v \in \R^n : v_1, \ldots, v_{n-1} \in \{-1,1\}, v_n = 1 \} )$,
which clearly satisfies $R_0^{\max}(K,-K) = R$.
Moreover, if we choose $x \coloneqq R (u^1 - u^n) \in K$ and $y \coloneqq R (u^1 + u^n) \in -K$,
then $\mu^{\frac{p-1}{p}} x + (1 - \mu)^{\frac{p-1}{p}} y \allowbreak \in F_K \cap F_{-K}$ for every $\mu \in [0,1]$
with $F_K$ and $F_{-K}$ being the positive hulls of the facets of $K$ and $-K$ that both intersect the rays $[0,\infty) x$ and $[0,\infty) y$.
Since $\gauge{x}_{-K} = \gauge{y}_K = R$, we obtain
$\gauge{\mu^{\frac{p-1}{p}} x + (1 - \mu)^{\frac{p-1}{p}} y}_K = \mu^{\frac{p-1}{p}} + (1 - \mu)^{\frac{p-1}{p}} R$
and $\gauge{\mu^{\frac{p-1}{p}} x + (1 - \mu)^{\frac{p-1}{p}} y}_{-K} = \mu^{\frac{p-1}{p}} R + (1 - \mu)^{\frac{p-1}{p}}$
for all $\mu \in [0,1]$.
Hence, equality is attained in the above estimate for the choice $L = -K$.
\end{proof}

\begin{remark}
\label{rem:R0-up-low-means_ineq_2}
In the above setting, the maps
\[
	[0,1] \ni \lambda \mapsto \gauge{\begin{pmatrix} R_0^{\max} & 1 \\ 1 & R_0^{\max} \end{pmatrix} \begin{pmatrix} 1 - \lambda \\ \lambda \end{pmatrix}}_{-q}
		\quad \text{and} \quad
	[0,1] \ni \lambda \mapsto \gauge{\begin{pmatrix} 1 - \lambda \\ \lambda \end{pmatrix}}_{\frac{p}{p-1}}
\]
are both strictly convex.
We obtain for $\lambda \in [0,1]$
\[
	\gauge{\begin{pmatrix} R_0^{\max} & 1 \\ 1 & R_0^{\max} \end{pmatrix} \begin{pmatrix} 1 - \lambda \\ \lambda \end{pmatrix}}_{-q}
		\leq \gauge{\begin{pmatrix} R_0^{\max} & 1 \\ 1 & R_0^{\max} \end{pmatrix} \begin{pmatrix} 1 \\ 0 \end{pmatrix}}_{-q}
		= 2^{-\frac{1}{q}} m_{-q}(R_0^{\max},1)
\]
with equality if and only if $\lambda \in \{ 0,1 \}$, and
\[
	\gauge{\begin{pmatrix} 1 - \lambda \\ \lambda \end{pmatrix}}_{\frac{p}{p-1}}
		\geq \gauge{\begin{pmatrix} 1/2 \\ 1/2 \end{pmatrix}}_{\frac{p}{p-1}}
		= 2^{-\frac{1}{p}}
\]
with equality if and only if $\lambda = 1/2$.

Altogether, the proof of Theorem~\ref{thm:upper_bounds}~(iv) shows
\begin{align*}
	R_0(\upmean{p}{K}{L},\lowmean{q}{K}{L})
    & \leq 2^{\frac{1}{q} - \frac{1}{p}} \sup \left\{
        \frac{
            \gauge{
                \begin{psmallmatrix} R_0^{\max} & 1 \\ 1 & R_0^{\max} \end{psmallmatrix}
                \begin{psmallmatrix} 1 - \lambda \\ \lambda \end{psmallmatrix}
            }_{-q}
        }{
            \gauge{\begin{psmallmatrix} 1 - \lambda \\ \lambda \end{psmallmatrix}}_{\frac{p}{p-1}}
        } : \lambda \in [0,1]
    \right\} \\
	& < 2^{\frac{1}{q} - \frac{1}{p}} \frac{2^{-\frac{1}{q}} m_{-q}(R_0^{\max},1) }{2^{-\frac{1}{p}}}
	= m_{-q}(R_0^{\max},1)
\intertext{
and thus, with $R_0^{\max} = R_0^{\max}(K,L) = R_0^{\max}(K^\circ,L^\circ)$ and Lemma~\ref{lem:p-mean_basics}~(i), also
}
	R_0(\upmean{p}{K}{L},\lowmean{q}{K}{L})
	& = R_0((\lowmean{q}{K}{L})^\circ,(\upmean{p}{K}{L})^\circ)
    = R_0(\upmean{-q}{K^\circ}{L^\circ},\lowmean{-p}{K^\circ}{L^\circ})
    \\
	& < m_{p}(R_0^{\max},1).
\end{align*}
\end{remark}

We turn to our last result about the tightness of Theorem~\ref{thm:upper_bounds}~(iii) discussed above.

\begin{lemma}
\label{lem:R0-up-low-means_strict}
Let $n = 2$, $K, L \in \CK^2_0$ with $R_0^{\max}(K,L) > 1$,
and $(p,q) \in (-\infty,1) \times (-1,\infty)$.
Then
\[
	R_0(\upmean{p}{K}{L},\lowmean{q}{K}{L})
		< \min \{ m_p(R_0^{\max}(K,L),1), m_{-q}(R_0^{\max}(K,L),1) \}.
\]
\end{lemma} \begin{proof}
In the proof of Theorem~\ref{thm:upper_bounds}~(iii), it is shown that
\begin{equation}
\label{eq:chain_strict}
	\upmean{p}{K}{L} \subset \conv(K \cup L) \subset m_{-q}(R_0^{\max},1) \lowmean{q}{K}{L}.
\end{equation}
Now, assume we would have $R_0(\upmean{p}{K}{L},\lowmean{q}{K}{L}) = m_{-q}(R_0^{\max},1)$.
Then there exists a common boundary point $x$ of $\upmean{p}{K}{L}$ and $m_{-q}(R_0^{\max},1) \lowmean{q}{K}{L}$,
which by \eqref{eq:chain_strict} must also be a boundary point of $\conv(K \cup L)$.
Moreover, there exists some vector $a \in \mS_2$ such that $H_{(a,a^T x)}^\leq$ supports all three bodies at $x$.
Our goal is to contradict $x \in \upmean{p}{K}{L}$ by finding an appropriate $\tilde{a} \in \R^2$
with $\tilde{a}^T x > h_{\upmean{p}{K}{L}}(\tilde{a})$.
With $R_0^{\max} = R_0^{\max}(K,L) = R_0^{\max}(K^\circ,L^\circ)$ and Lemma~\ref{lem:p-mean_basics}~(i),
the strict inequality with respect to $m_{-q}(R_0^{\max},1)$ then also implies
\begin{align*}
	R_0(\upmean{p}{K}{L},\lowmean{q}{K}{L})
    & = R_0((\lowmean{q}{K}{L})^\circ,(\upmean{p}{K}{L})^\circ)
	= R_0(\upmean{-q}{K^\circ}{L^\circ},\lowmean{-p}{K^\circ}{L^\circ})
    \\
	& < m_p(R_0^{\max},1).
\end{align*}

Since $p < 1$, Lemma~\ref{lem:common_boundary_different_p}~(ii) implies $h_K(a) = h_L(a) = a^T x$,
and from $\upmean{p}{K}{L} \subset \upmean{1}{K}{L} = \frac{K+L}{2}$,
we obtain that there exist $v \in K$ and $w \in L$ such that $x = \frac{v + w}{2}$ and $a^T v = h_K(a) = h_L(a) = a^T w$.
Now, $[v,w] \subset \conv(K \cup L) \cap H_{(a,a^T x)}$ implies by our assumptions that
$[v,w] \subset \bd(m_{-q}(R_0^{\max},1) \lowmean{q}{K}{L})$.

For $t \in [0,1]$, define $\gamma(t) \coloneqq (1-t) v + t w$.
Then $\left[ \frac{1}{R_0^{\max}} v,w \right] \subset L$ yields
\[
	\frac{\gamma(t)}{(1 - t) R_0^{\max} + t}
		= \frac{(1 - t) R_0^{\max}}{(1 - t) R_0^{\max} + t} \left( \frac{1}{R_0^{\max}} v \right) + \frac{t}{(1 - t) R_0^{\max} + t} w \in L
\]
and thus $\gauge{\gamma(t)}_L \leq (1 - t) R_0^{\max} + t$.
From $[v,w] \subset \bd(m_{-q}(R_0^{\max},1) \lowmean{q}{K}{L})$ we derive
\begin{equation}
\label{eq:gamma_gauge}
	m_{-q}(R_0^{\max},1)
		= \gauge{\gamma(t)}_{\lowmean{q}{K}{L}}
		\leq m_{-q}(\gauge{\gamma(t)}_K,\gauge{\gamma(t)}_L)
		\leq m_{-q}(\gauge{\gamma(t)}_K,(1-t) R_0^{\max} + t).
\end{equation}
Now, the function
\[
	g_q(t) \coloneqq		\begin{cases}
						((R_0^{\max})^{-q} + 1 - ((1 - t) R_0^{\max} + t)^{-q})^{-\frac{1}{q}},	& \text{if } q \neq 0, \\
						\frac{R_0^{\max}}{(1 - t) R_0^{\max} + t},								& \text{if } q = 0,
					\end{cases}
\]
satisfies
\begin{equation}
\label{eq:g_q_prop}
	m_{-q}(g_q(t),(1-t) R_0^{\max} + t) = m_{-q}(R_0^{\max},1).
\end{equation}
Since $m_{-q}$ is strictly increasing in its two arguments for finite $q$,
\eqref{eq:gamma_gauge} implies $\gauge{\gamma(t)}_K \geq g_q(t)$ (cf.~Figure~\ref{fig:R0-up-low-means_strict}).

\begin{figure}[ht]
\def\scale{1.3}
\begin{tikzpicture}[scale=\scale, line join=round]
\def\R{3}
\def\p{(-2)}
\def\q{1}

\fill[orange!20] (-2,0) -- (0,0) -- (0,1) -- (-2,{1+2/\R});
\fill[orange!20] (0,-2) -- (0,0) -- (\R,0) -- ({\R+2/\R},-2);
\fill[orange!20] (-2,-2) -- (0,-2) -- (0,0) -- (-2,0);
\fill[orange!20] (0,0) -- plot[variable=\t,domain=0:1,smooth]	(	{(1-\t)*\R/((\R^(-\q) + 1 - (\R - \t*(\R-1))^(-\q))^(-1/\q))},
												{\t*\R/((\R^(-\q) + 1 - (\R - \t*(\R-1))^(-\q))^(-1/\q))}	);

\draw[-latex,thick] (-2,0) -- ({\R+1},0);
\draw[-latex,thick] (0,-2) -- (0,{\R+1});

\draw[red,dotted] (\R,0) -- (0,1);
\draw[blue,dotted] (0,\R) -- (1,0);

\draw[blue,thick] (1,0) -- ({1+2/\R},-2);
\draw[blue,thick] (0,\R) -- (-2,{\R+2/\R});

\draw[dashdotted,thick] ({((\R^\q+1)/2)^(1/\q)},0) -- (0,{((\R^\q+1)/2)^(1/\q)});

\draw[orange,thick] (0,1) -- (-2,{1+2/\R});
\draw[orange,thick] (\R,0) -- ({\R+2/\R},-2);
\draw[orange,thick,variable=\t,domain=0:1,smooth] plot	(	{(1-\t)*\R/((\R^(-\q) + 1 - (\R - \t*(\R-1))^(-\q))^(-1/\q))},
												{\t*\R/((\R^(-\q) + 1 - (\R - \t*(\R-1))^(-\q))^(-1/\q))}	);

\draw[dashdotted,thick] ({\R+1},-1) -- (-1,{\R+1});

\draw[dashed,red,thick] ({\R+1},{-1/(1 + (\R-1)/(\R^(\q+1)))}) -- ({\R - (\R + 1) * (1 + (\R-1)/(\R^(\q+1))))},{\R+1});
\draw[dashed,blue,thick] ({- (1 + (\R-1)/(\R^(\q+1)))},{\R+1}) -- ({\R+1},{((\R-1)/(\R^\q) - 1)/(1 + (\R-1)/(\R^(\q+1)))});

\draw[-latex] ({\R-1},1) -- (\R,2);
\node[anchor=west] at (\R,2) {$a$};

\draw[-latex,red] (1,{(\R-1)/(1 + (\R-1)/\R^(\q+1))}) -- (2,{(\R-1)/(1 + (\R-1)/\R^(\q+1)) + 1 + (\R-1)/(\R^(\q+1))});
\node[anchor=west] at (2,{(\R-1)/(1 + (\R-1)/\R^(\q+1)) + 1 + (\R-1)/(\R^(\q+1))}) {$a_q$};

\fill (\R,0) circle [radius=\ds] node[anchor=south west] {$v$};
\fill ({((\R^\q+1)/2)^(1/\q)},0) circle [radius=\ds] node[anchor=north] {$\frac{v}{m_{-q}(R_0^{\max},1)}$};
\fill (1,0) circle [radius=\ds] node[anchor=north east] {$\frac{v}{R_0^{\max}}$};

\fill (0,\R) circle [radius=\ds] node[anchor=south west] {$w$};
\fill (0,{((\R^\q+1)/2)^(1/\q)}) circle [radius=\ds] node[anchor=east] {$\frac{w}{m_{-q}(R_0^{\max},1)}$};
\fill (0,1) circle [radius=\ds] node[anchor=north east] {$\frac{w}{R_0^{\max}}$};

\fill ({\R/2},{\R/2}) circle [radius=\ds] node[anchor=west] {$x$};
\end{tikzpicture}
\caption{
The situation in the proof of Lemma~\ref{lem:R0-up-low-means_strict}:
None of the points on the orange curve (which consist of $\gamma/g_q$ and two rays) lie in $\inte(K)$, so $K$ is a subset of the orange area.
The two dashed lines are parallel and support $K$ at $v$ and $L$ at $w$, respectively.
The dash-dotted line and segment are also parallel.
The line containing $x$ supports $\conv( K \cup L)$ at $x$,
whereas the segment belongs to $\bd(\lowmean{q}{K}{L})$.
}
\label{fig:R0-up-low-means_strict}
\end{figure}

In particular, we obtain $\gauge{w}_K = \gauge{\gamma(1)}_K \geq g_q(1) = R_0^{\max}$
and since $w \in L \subset R_0^{\max} K$, it follows $\frac{1}{R_0^{\max}} w \in \bd(K)$.
Now, $v \in K$ and $R_0^{\max} > 1$ show $w \neq v$.
Since $[v,w] \subset \bd(\conv(K \cup L))$ and $0 \in \inte(\conv(K \cup L))$,
the line $\aff(\{v,w\})$ cannot contain $0$, i.e., $v$ and $w$ are linearly independent.
Using Lemma~\ref{lem:p-mean_basics}~(iv), we may w.l.o.g.~assume
that $v$ and $w$ are orthogonal and $\gauge{v}_2 = \gauge{w}_2 = 1$ (applying an appropriate linear transformation otherwise).

Next, we define $a_q \coloneqq v + ( 1 + \frac{R_0^{\max} - 1}{(R_0^{\max})^{q+1}} ) w$
and claim for all $t \in [0,1]$ that $a_q^T \frac{\gamma(t)}{g_q(t)} \leq 1$, or equivalently $a_q^T \gamma(t) \leq g_q(t)$.
Due to the strict monotonicity of $m_{-q}$, it suffices to show
\begin{equation}
\label{eq:a_q_ineq}
	m_{-q}(a_q^T \gamma(t),(1-t) R_0^{\max} + t)
		\leq m_{-q}(g_q(t),(1-t) R_0^{\max} + t).
\end{equation}
By \eqref{eq:g_q_prop}, the orthonormality of $\{v,w\}$, and the definitions of $a_q$ and $\gamma(t)$, this is equivalent to
\[
	f(t) \coloneqq
		m_{-q} \left( 1 + t \frac{R_0^{\max} - 1}{(R_0^{\max})^{q+1}}, R_0^{\max} - t (R_0^{\max} - 1) \right) \\
		\leq m_{-q}(R_0^{\max},1).
\]
There exists some $\varepsilon > 0$ such that $(-\varepsilon,1+\varepsilon) \ni t \mapsto f(t)$ is a well-defined function.
Since $-q < 1$, it follows from the reverse Minkowski inequality that $f$ is concave.
Moreover, $f$ is differentiable and it is straightforward to compute $f'(0) = 0$.
Hence, $f$ attains its maximum over $t \in [0,1]$ at $t = 0$ with $f(0) = m_{-q}(R_0^{\max},1)$.
In summary, \eqref{eq:a_q_ineq}, and thus $a_q^T \gamma(t) \leq g_q(t)$, is proven.
Together with $\gauge{\gamma(t)}_K \geq g_q(t)$, we obtain for every $y \in K \cap \pos(\{ v,w \})$
\[
	a_q^T y
	\leq \sup \left\{ \frac{a_q^T \gamma(t)}{\gauge{\gamma(t)}_K} : t \in [0,1] \right\}
	\leq \sup \left\{ \frac{a_q^T \gamma(t)}{g_q(t)} : t \in [0,1] \right\}
	\leq 1,
\]
with equality from left to right if $y = v$.

Since $\frac{1}{R_0^{\max}} w \in \bd(K)$, $v \in K$, and $0 \in \inte(K)$
we must have
\begin{align*}
	K \cap \pos(\{-v,w\})
		& \subset \left( v + \pos \left(\left\{ -v, \left( \frac{1}{R_0^{\max}} w - v \right) \right\}\right) \right) \cap \pos(\{ -v,w \}) \\
		& = \left[ 0, \frac{1}{R_0^{\max}} w \right] + \pos \left(\left\{ -v, \left( \frac{1}{R_0^{\max}} w - v \right) \right\}\right)
\end{align*}
(cf.~Figure~\ref{fig:R0-up-low-means_strict}).
From $a_q^T v = 1 \geq a_q^T \frac{\gamma(1)}{g_q(1)} = a_q^T ( \frac{1}{R_0^{\max}} w )$ and $a_q^T (-v) < 0$,
we obtain for every $y \in K \cap \pos(\{-v,w\})$ that $a_q^T y \leq 1$.
Exchanging the roles of $(K,v)$ and $(L,w)$ further shows
\begin{align*}
	L \cap \pos(\{v,-w\})
		& \subset \left[ 0, \frac{1}{R_0^{\max}} v \right] + \pos \left(\left\{ -w, \left( \frac{1}{R_0^{\max}} v - w \right) \right\}\right),
\intertext{
so $K \subset R_0^{\max} L$ yields
}
	K \cap \pos(\{v,-w\})
		& \subset \left[ 0, v \right] + \pos \left(\left\{ -w, \left( \frac{1}{R_0^{\max}} v - w \right) \right\}\right).
\end{align*}
Using the orthonormality of $\{v,w\}$, we compute
$a_q^T (\frac{1}{R_0^{\max}} v - w) = \frac{1}{R_0^{\max}} - 1 - \frac{R_0^{\max} - 1}{(R_0^{\max})^{q+1}} < 0$ and $a_q^T (-w) < 0$,
which shows for all $y \in K \cap \pos(\{v,-w\})$ that $a_q^T y \leq a_q^T v = 1$.
Finally, every $y \in K \cap \pos(\{-v,-w\})$ clearly satisfies $a_q^T y \leq 0$.
Altogether, $H_{(a_q,1)}^\leq$ supports $K$ at $v$ (cf.~Figure~\ref{fig:R0-up-low-means_strict}).

Next, we argue that $H_{(a_q,a_q^T w)}^\leq$ supports $L$ at $w$.
Since $L \subset \conv(K \cup L)$, we have
\[
	L \cap \pos(\{v,w\})
		\subset \conv(K \cup L) \cap \pos(\{v,w\})
		= \conv(\{v,w,0\}).
\]
Among the three points in the convex hull, $w$ uniquely has the largest inner product with $a_q$ by $R_0^{\max} > 1$.
The intersections of $L$ with $\pos(\{v,-w\})$, $\pos(\{-v,w\})$, and $\pos(\{-v,-w\})$ are handled analogously as for $K$
(cf.~Figure~\ref{fig:R0-up-low-means_strict}).
Altogether, it follows $h_K(a_q) = a_q^T v < a_q^T w = h_L(a_q)$.
In particular, $p < 1$ implies
\[
	h_{\upmean{p}{K}{L}}(a_q) \leq m_p(h_K(a_q),h_L(a_q)) < m_1(h_K(a_q),h_L(a_q)) = a_q^T \left( \frac{v + w}{2} \right) = a_q^T x,
\]
contradicting $x \in \upmean{p}{K}{L}$ as desired.
\end{proof}

Let us point out that almost all steps in the above proof apply for any dimension $n \geq 2$.
The only time the assumption $n = 2$ is needed is when we conclude from $\sup\{ a_q^T y : y \in K \cap \lin(\{v,w\}) \} = a_q^T v$
that $H_{(a_q,a_q^T v)}^\leq$ contains $K$ (and for the analogous statement with $(L,w)$ instead of $(K,v)$).

Before we end this section with the proof of Theorem~\ref{thm:stability},
let us collect some remarks about the inequalities on the covering radii between $p$-means.
They concern the tightness of the upper bounds on $R_0^{\max}(\upmean{p}{K}{L},\lowmean{q}{K}{L})$,
an extension of our results to bodies not necessarily containing $0$ in their interior,
and a comparison to the results in \cite{BrDiMe}.

\begin{remark}
For $n \geq 2$ and $R \geq 1$, let $K \coloneqq \conv( \{ v, -R v \in \R^n : v_1, \ldots, v_{n-1} \in \{-1,1\}, v_n = 1 \} )$
be the body considered in the proofs of tightness for
Theorem~\ref{thm:upper_bounds}~(iii)~\m&~(iv).
There, it is shown that $R_0(\upmean{p}{K}{-K},\lowmean{q}{K}{-K})$ reaches the upper bound in the respective inequality
if $(p,q) \in (1,\infty) \times \R$, which can be extended to $(p,q) \in [1,\infty] \times [-\infty,\infty]$ by continuity.
This can be further extended to $(p,q) \in [-\infty,1] \times [-\infty,-1]$ by the following argument.

A somewhat lengthy, but not difficult computation involving Theorem~\ref{thm:polytopes}
shows for $p \in [-\infty,1]$ that all facets of the polytope $\upmean{p}{K}{-K}$
are parallel to appropriately chosen facets of $K$ or $-K$.
From this, we obtain
\[
	\upmean{p}{K}{-K} = H_{(\pm u^n,m_p(R,1))}^\leq
		\cap \bigcap_{i \in [n-1]} H_{(\pm \frac{R+1}{2 R} u^i \pm \frac{R-1}{2 R} u^n,m_p(R,1))}^\leq,
\]
where all signs are taken independently.
Using this and $\gauge{\cdot}_{\lowmean{q}{K}{-K}} = m_{-q}(\gauge{\cdot}_K,\gauge{\cdot}_{-K})$ for $q \in [-\infty,-1]$,
it can be shown that $\upmean{p}{K}{-K}$ and $m_p(R,1) \lowmean{q}{K}{-K}$ have $\frac{R m_p(R,1)}{m_1(R,1)} u^1$ as a common boundary point 
for the given ranges of $p$ and $q$.
In particular, the results in this section show that this construction yields tightness in the upper bound on $R_0(\upmean{p}{K}{L},\lowmean{q}{K}{L})$
for $n \geq 2$ and all triples
$(p,q,R) \notin [-\infty,1) \times (-1,\infty] \times (1,\infty)$.

One might expect that the largest possible value of $R_0(\upmean{p}{K}{L},\lowmean{q}{K}{L})$
is achieved by the above construction for this constellation as well.
For $q \in [-1,\infty]$, we already obtained in the proof of Theorem~\ref{thm:upper_bounds}~(iii) 
that $\lowmean{q}{K}{-K} = m_q(R,1) \B_{\infty}$ (cf.~\eqref{eq:low-mean_construction}).
Hence, we can compute in this case
\begin{equation}
\label{eq:R0_open_example}
	R_0^{\max}(\upmean{p}{K}{-K},\lowmean{q}{K}{-K}) = \frac{m_p(R,1) \, m_{-q}(R,1)}{m_1(R,1)}.
\end{equation}
For $R > 1$ and $(p,q) \in (\{-\infty\} \times (-1,\infty)) \cup ((-\infty,1) \times \{\infty\})$,
this is strictly less than the known optimal upper bound $1$
from Theorem~\ref{thm:same_p} and Theorem~\ref{thm:upper_bounds}~(ii).
If we assume the optimal upper bound to be continuous in $(p,q)$,
then it is larger than the value in \eqref{eq:R0_open_example}
at least for $p$ sufficiently close to $-\infty$ or $q$ sufficiently close to $\infty$, as well.
\end{remark}

\begin{remark}
\label{rem:R0_boundary}
Theorems~\ref{thm:lower_bound}~and~\ref{thm:upper_bounds} remain valid
if we only require $K,L \in \CK^n$ with $0 \in K \cap L$ and $R_0^{\max}(K,L) \in (0,\infty)$,
instead of $K,L \in \CK^n_0$.
Surely, we need to extend the definitions of upper and lower $p$-means appropriately in this case.
We first observe that $R_0^{\max}(K,L) < \infty$ implies for $v,a \in \mS_2$
that $\gauge{v}_K = \infty$ if and only if $\gauge{v}_L = \infty$,
and $h_K(a) = 0$ if and only if $h_L(a) = 0$.
Thus, with the conventions $m_p(0,0) = 0$, $m_p(\infty,\infty) = \infty$, and $\frac{1}{\infty} = 0$ for any $p \in [-\infty,\infty]$,
one defines $\upmean{p}{K}{L}$ and $\lowmean{p}{K}{L}$ as in Definition~\ref{def:p-means}.

Now, to see that Theorems~\ref{thm:lower_bound}~and~\ref{thm:upper_bounds} remain valid,
we first observe that Lemma~\ref{lem:p-mean_basics}~(ii)~and~(iii) remain true with almost identical proofs.
Moreover, $R_0^{\max}(K,L) \in (0,\infty)$ shows $d \coloneqq \dim(K) = \dim(L) \geq 1$,
so comparing $d$-dimensional volumes yields $R_0^{\max}(K,L) \geq 1$.
It also keeps true
\[
	\lowmean{-\infty}{K}{L} = \upmean{-\infty}{K}{L} = K \cap L
		\quad \text{and} \quad
	\lowmean{\infty}{K}{L} = \upmean{\infty}{K}{L} = \conv(K \cup L).
\]
Furthermore, it can be shown that $\lowmean{p}{K}{L}$ is closed for any $p \in [-\infty,\infty]$
by first proving that the set $V$ generating $\lowmean{p}{K}{L}$ in Definition~\ref{def:p-means}
satisfies $\cl(V) \subset \conv(V)$ under the above assumptions.
Thus, all steps in the proof of Theorem~\ref{thm:lower_bound} and Theorem~\ref{thm:upper_bounds}~(i)~--~(iii)
can still be executed, where the only step we need to be more careful with is the proof of \eqref{eq:R0_p-means_bound}.
Here, we use that $R_0^{\max}(K,L) > 0$ shows that with \eqref{eq:frac_def},
we may disregard all $a \in \mS_2$ with $h_K(a) = h_L(a) = 0$ or $h_{K^\circ}(a) = h_{L^\circ}(a) = \infty$.
Then we can again apply Remark~\ref{rem:phi_pq}.

Lastly, we observe that the proof of \eqref{eq:up-mean_alt_rep} in \textup{\cite{LuYaZh}}
also applies for $K,L \in \CK^n$ with $0 \in K \cap L$ instead of $K,L \in \CK^n_0$.
Hence, the given proof of Theorem~\ref{thm:upper_bounds}~(iv) can still be executed as well,
where we replace $\gauge{\cdot}_{\lowmean{q}{K}{L}} = m_{-q}(\gauge{\cdot}_K,\gauge{\cdot}_L)$ in the main estimate
with the always
true inequality $\gauge{\cdot}_{\lowmean{q}{K}{L}} \leq m_{-q}(\gauge{\cdot}_K,\gauge{\cdot}_L)$
to ensure its correctness in the more general setting.
\end{remark}

\begin{remark}
\label{rem:BDM_comparison}
Let us compare the bounds obtained in this section to the results in \textup{\cite{BrDiMe}}.
There, the circumradii between the four standard symmetrizations of Minkowski centered convex bodies $K \in \CK^n_0$ are considered.
Since circumradii and covering radii coincide when only $0$-symmetric bodies are involved,
the results in \textup{\cite{BrDiMe}} could also be expressed using the latter. Now,
\textup{\cite[Theorem~1.3]{BrDiMe}} shows upper bounds on these radii,
which can be obtained as special cases from Theorem~\ref{thm:upper_bounds}~(i)~\m&~(iii) here.
However, the lower bounds specified in \textup{\cite[Theorem~1.7]{BrDiMe}} are stronger than those obtained here,
which heavily relies on the restriction to $L = -K$.
Moreover, \textup{\cite[Theorem~1.5]{BrDiMe}} also states that the upper bound on the radii
between some of the four standard symmetrizations in forward direction of $p$-values can be improved
for $n$ even and sufficiently large values of the Minkowski asymmetry (namely, larger than the asymmetry threshold of means \eqref{eq:threshold}).
This contrasts the tightness results obtained here,
which show that,
even under the restriction to $L = -K$,
there is no qualitative change in the optimal upper bound for large values of $R_0^{\max}(K,-K)$
if we allow $K$ to be positioned arbitrarily.
This highlights that the optimality in $-K \subset^{opt} s(K) K$ for a Minkowski centered $K \in \CK^n_0$
is essential for \textup{\cite[Theorem~1.5]{BrDiMe}}.
\end{remark}

Finally, we close this section with the proof of Theorem~\ref{thm:stability}.

\begin{proof}[Proof of Theorem~\ref{thm:stability}]
By Theorem~\ref{thm:lower_bound}, we have
\[
    \Phi_{p,q}(R_0^{\max}(K,L))
    \leq R_0(M_q(K,L),M_p(K,L))
    \leq R_0^{\max}(M_p(K,L),M_q(K,L))
    = 1 + \varepsilon,
\]
with equality throughout if $K$ and $L$ are dilates of each other.
For general $K$ and $L$, equality still holds 
throughout if $p=-\infty$ or $q=\infty$ according to Theorem~\ref{thm:same_p} and Theorem~\ref{thm:upper_bounds}~(i)~\m&~(ii).
Now, Remark~\ref{rem:phi_pq} shows for $p < q$ that
$\Phi_{p,q}'(t) > 0$ for all $t > 1$,
so the restriction of $\Phi_{p,q}$ to the domain $[1,\infty)$ is invertible with strictly increasing inverse function.
It follows
\[
    R_0^{\max}(K,L)
    \leq \Phi_{p,q}^{-1}(1+\varepsilon),
\]
with equality cases corresponding to the above.
Therefore, the theorem can be established by proving appropriate upper and lower bounds to $\Phi_{p,q}^{-1}(1+\varepsilon)$.

For (i), in which case $p,q \in \R$, we define for $\delta \in [0,1)$ the function
\[
    g_{p,q,\delta} \colon [1,\infty) \to [1,\infty),
    g_{p,q,\delta}(x) = 1 + \frac{q-p}{8} (1-\delta) (x-1)^2.
\]
This function is invertible with strictly increasing inverse
\[
    g_{p,q,\delta}^{-1} \colon [1,\infty) \to [1,\infty),
    g_{p,q,\delta}^{-1}(y)
    = 1 + \sqrt{ \frac{8 (y-1) }{(1-\delta) (q-p)} }.
\]
Lemma~\ref{lem:stability} shows for $y \in \Phi_{p,q}([1,\infty))$ that
\[
    1 + \sqrt{ \frac{8 (y-1)}{q-p} }
    = g_{p,q,0}^{-1}(y)
    \leq \Phi_{p,q}^{-1}(y).
\]
Moreover, if $y \in [1,\Phi_{p,q}(1+\frac{\delta}{\max\{ |p|, |q| \} + 3/2})]$, then
\[
    \Phi_{p,q}^{-1}(y)
    \leq g_{p,q,\delta}^{-1}(y)
    = 1 + \sqrt{ \frac{8 (y-1) }{(1-\delta) (q-p)} }.
\]
This yields the required bounds to $\Phi_{p,q}^{-1}(1+\varepsilon)$ for (i).

For (ii) and (iii), we note that it suffices to consider the case for $p=-\infty$
since $\Phi_{-q,-p} = \Phi_{p,q}$ by Remark~\ref{rem:phi_pq}.
Now, the inverse function of $\Phi_{-\infty,q}$ restricted to $[1,\infty)$ can be given explicitly as
\[
    \Phi_{-\infty,q}^{-1}(y)
    = \begin{cases}
        (2 y^q - 1)^{\frac{1}{q}}, & \text{if } q \notin \{0,\infty\},
        \\
        y^2, & \text{if } q = 0,
        \\
        y, & \text{if } q = \infty.
    \end{cases}
\]
This immediately verifies (iii).
For (ii), we note that the mapping $\Phi_{-\infty,q}^{-1}$ can be extended to $y$ slightly smaller than $1$ to make it differentiable at any $y \in \Phi_{-\infty,q}([1,\infty))$.
We can thus use Taylor approximation to obtain
\[
    \Phi_{-\infty,q}^{-1}(y)
    = \Phi_{-\infty,q}^{-1}(1) + (\Phi_{-\infty,q}^{-1})'(1) \cdot (y-1) + R_{-\infty,q}(y)
    = 1 + 2 (y-1) + R_{-\infty,q}(y),
\]
where $R_{-\infty,q}(y) \coloneqq \Phi_{-\infty,q}^{-1}(y) - (1 + 2(y-1)) = \CO((y-1)^2)$.
This completes the proof.
\end{proof}

\section{Banach--Mazur Distance}
\label{sec:distance}

We close this paper by drawing connections between our bounds on covering radii between $p$-means and the Banach--Mazur distance.
Our focus hereby lies on identifying certain structures within the Banach--Mazur compactum that are provided by $p$-means.

Theorem~\ref{thm:upper_bounds}~(i) lets us estimate the Banach--Mazur distance between $K$ (or $L$)
and the $p$-means of $K$ and $L$.
The special case for $p$-symmetrizations, i.e., when $L = -K$, is of particular interest.
It extends the rather well-known result that for any $K \in \CK^n_0$,
$\frac{K-K}{2}$ minimizes the Banach--Mazur distance to $K$ among all symmetric convex bodies.
A generalization of this property to the four standard symmetrizations already appears in \cite[Remark~$4.6$]{BrDiMe}.

\begin{corollary}
\label{cor:dBM}
Let $p \in [-\infty,\infty]$ and $K,L,C \in \CK^n_0$ such that $\lowmean{p}{K}{L} \subset C \subset \upmean{p}{K}{L}$.
Then
\[
	d_{BM}(K,C) \leq R_0^{\max}(K,L).
\]
Moreover, if $K$ is Minkowski centered, $L = -K$, and $C$ symmetric, then
\[
	d_{BM}(K,C) = s(K).
\]
\end{corollary} \begin{proof}
\eqref{eq:reverse_chain} implies
\begin{align*}
	K
		\subset \conv(K \cup L)
		& \subset m_{-p}(R_0^{\max}(K,L),1) \lowmean{p}{K}{L}
		\subset m_{-p}(R_0^{\max}(K,L),1) C \\
		& \subset m_{-p}(R_0^{\max}(K,L),1) \upmean{p}{K}{L}
		\subset R_0^{\max}(K,L) \cdot (K \cap L)
		\subset R_0^{\max}(K,L) K
\end{align*}
and therefore immediately the upper bound.

If $K$ is Minkowski centered and $L = -K$, then $R_0^{\max}(K,L) = s(K)$.
Moreover, it is well-known that $d_{BM}(K,C) \geq s(K)$ if $C$ is symmetric (see, e.g.,~\cite{Grue}).
Together with the upper bound, the corollary is proven.
\end{proof}

\begin{remark}
With Remark~\ref{rem:R0_boundary}, we see that Corollary~\ref{cor:dBM} remains valid
if we replace for fulldimensional bodies $K$ and $L$ the assumption $0 \in \inte(K \cap L)$ by $0 \in K \cap L$ and $R_0^{\max}(K,L) \in (0,\infty)$.

Now, let $K'$ and $L'$ be fulldimensional affine transformations of $K$ and $L$ that satisfy
\[
	0 \in K' \cap L'
		\quad \text{and} \quad
	K' \subset \sqrt{d_{BM}(K,L)} L' \subset d_{BM}(K,L) K'.
\]
Such transformations always exist by Remark~\ref{rem:R0_basics}.
In this case, $R_0^{\max}(K',L') = \sqrt{d_{BM}(K,L)}$.
Thus, any convex body $C$ with $\lowmean{p}{K'}{L'} \subset C \subset \upmean{p}{K'}{L'}$ for some $p \in [-\infty,\infty]$ satisfies
\[
	d_{BM}(K,C) = d_{BM}(L,C) = \sqrt{d_{BM}(K,L)},
\]
where we used the extended Corollary~\ref{cor:dBM} and the triangle inequality.
Evidently, all $p$-means allow to construct bodies that are midpoints (in the multiplicative sense)
between $K$ and $L$ w.r.t.~the Banach--Mazur distance.
It is rather easy to see that this process can also be used to obtain geodesics between $K$ and $L$.

While the existence of geodesics between two bodies w.r.t.~the Banach--Mazur distance is known (cf.~\textup{\cite{ArKov}}),
it might be helpful in understanding the general structure of the Banach--Mazur compactum that they can be constructed using any $p$-mean.
For example, estimates on the distance between $p$-means could in turn yield bounds on the distances between the original bodies.
\end{remark}

If $K \in \CK^n_0$ is not 0-symmetric, then $\lowmean{p}{K}{-K} \neq \upmean{p}{K}{-K}$ for
$n \geq 2$ and $p \in \R$ by Theorem~\ref{thm:same_p}.
Thus, the relaxation to a set $C$ between these two symmetrizations in Corollary~\ref{cor:dBM}
generally applies to more sets than just the two means.

We also see that any non-symmetric (Minkowski centered) convex body is equidistant
w.r.t.~the Banach--Mazur distance from an infinitely large class of affinely non-equivalent symmetric bodies.
This seems to be of particular interest in the light of \cite{KobVa},
where it is shown that for $n = 2$ and $s \in [\frac{7}{4},2]$,
there exists a Minkowski centered body $K \in \CK^2_0$ with $s(K) = s$ that is at distance $s(K)$ from all symmetric bodies w.r.t.~the Banach--Mazur distance.

\begin{remark}
For a Minkowski centered body $K \in \CK^n_0$,
one can by Corollary~\ref{cor:dBM} continuously and increasingly deform $K \cap (-K)$ into $\conv(K \cup (-K))$
such that all intermediate bodies are symmetric and have Banach--Mazur distance $s(K)$ to $K$.
Since all intermediate bodies only need to be contained between an upper and a lower $p$-mean for some $p \in [-\infty,\infty]$ to apply the corollary,
there is even some degree of freedom in the construction of such a process.

This naturally raises the question whether any symmetric body $C$ with an affine transformation $C'$
satisfying $K \cap (-K) \subset C' \subset \conv(K \cup (-K))$ fulfills $d_{BM}(K,C) = s(K)$.
If this implication had been true, it might have helped with extending the result in \textup{\cite{KobVa}}
concerning bodies equidistant from all symmetric bodies to higher dimensions or smaller Minkowski asymmetry values.
One might also ask if the reverse implication is true,
as this could have had consequences on how small the asymmetry of such bodies can be.

Unfortunately, both implications turn out to be incorrect in general:
Consider a Minkowski centered regular triangle $T \in \CK^2_0$.
Let $s \in [\sqrt{4/3},2)$ and define $K = T \cap (-s T)$.
Since
\begin{equation}
\label{eq:dBM_example}
	\frac{s}{2} T
		\subset K
		\subset (s^2 T) \cap (-s T)
		= -s K
		\subset -s T
\end{equation}
and $s(T) = 2$, $K$ is Minkowski centered with $s(K) = s$.
It can be verified that the Euclidean circumcircle of $K \cap (-K) = T \cap (-T)$ is a subset of $\conv(K \cup (-K))$.
Now, $d_{BM}(K,\B_2) = s(K)$ would imply that there exist an ellipse $E$ and some $t \in \R^n$ with $K \subset E \subset -s K + t$.
From replacing $-s K$ with $-s K + t$ in \eqref{eq:dBM_example}
and the uniqueness of the Minkowski center of $T$,
we obtain $t = 0$.
However, $\bd(K) \cap \bd(-s K)$ contains a segment, which would thus also have to belong to $\bd(E)$, a contradiction.
Therefore, we have found an example where $K \cap (-K) \subset C \subset \conv( K \cup (-K) )$ does not imply $d_{BM}(K,C) = s(K)$.

For a counterexample to the reverse implication, we choose $C = \B_2$ and
\[
	K = \conv \left( \left\{
				\begin{pmatrix} 0\\ 6 \end{pmatrix},
				\begin{pmatrix} \pm 2 \sqrt{5} \\ 1\end{pmatrix},
				\begin{pmatrix} \pm 2 \sqrt{5} \\ -4 \end{pmatrix}
			\right\} \right).
\]
It is straightforward to verify $-K \subset 6 C \subset \frac{3}{2} K$,
where Proposition~\ref{prop:opt_cont} shows that $-K \subset \frac{3}{2} K$ is optimal.
It follows $d_{BM}(K,C) = s(K) = \frac{3}{2}$.
However, the boundaries of $K \cap (-K)$ and $\conv(K \cup (-K))$ contain a common segment,
so no ellipse fits between these two bodies.

Finally, let us briefly mention that the above questions can also be considered
for the inclusion between means in reverse order, i.e., between $\conv(K \cup (-K))$ and $s(K) (K \cap (-K))$.
While any symmetric body $C$ with an affine transformation $C'$ satisfying
\[
	-K
	\subset \conv(K \cup (-K))
	\subset C'
	\subset s(K) (K \cap (-K))
	\subset s(K) K
\]
clearly fulfills $d_{BM}(K,C) = s(K)$,
the reverse implication again fails in general as easily seen for $K$ a triangle and $C$ a parallelogram.
\end{remark}

\bigskip

René Brandenberg --
Technical University of Munich, Department of Mathematics, Germany. \\
\textbf{rene.brandenberg@tum.de}

Florian Grundbacher -- 
Technical University of Munich, Department of Mathematics, Germany. \\
\textbf{florian.grundbacher@tum.de}

\vfill\eject

\end{document}